\newtheorem{theorem}{Theorem}
\newtheorem{proposition}[theorem]{Proposition}
\newenvironment{proof}[1][Proof]{\noindent\textbf{#1.} }{\ \rule{0.5em}{0.5em}}
\begin{document}

\author{ Ummahan Ege Arslan\footnote{Corresponding author: Ummahan Ege Arslan, email: uege@ogu.edu.tr
\newline
This work was partially supported by T\"{U}B\.{I}TAK (The Scientific and Technological Research Council Of Turkey) for the second author. \newline
\text{M. S. Classification}:18E20.}\ \ and G\"{u}l\"{u}msen Onarl\i \vspace{0.5cm}\\ Department of
Mathematics and Computer Sciences, \\ Eski\c{s}ehir Osmangazi
University }
\title{\textbf{The Embedding Theorem}}
\date{}
\maketitle

\abstract{In this work, it is shown that the category $\mathsf{XMod/P}$ of crossed modules over fixed group $P$ is an exact category and the complete proof of the embedding theorem of\textsf{\ }$\mathsf{XMod/P}$ into a set valued functor category is given.}

\vspace{0.5cm}

\begin{tabular}{l}
\textbf{Keywords}: Crossed Module, Exact Category, Embedding Theorem.
\end{tabular}

\section*{Introduction}
Barr \cite{Barr} introduced exact categories in order to define a good notion
of non-abelian cohomolgy and to generalise Mitchell's \cite{Mitchell} embedding theorem for
Abelian categories to a non-abelian context, in particular; he proved that:

\textquotedblleft every small regular category has an exact embedding into a
set valued functor category (\textsf{C}$^{\text{\textsf{op}}}$,\textsf{Set})
where \textsf{C} is some small category.\textquotedblright

The purpose of this paper is to prove that $\mathsf{XMod/P}$ is an exact category and
the functor
\begin{equation*}
\mathcal{U}:\mathsf{XMod/P}\rightarrow \mathsf{( \Gamma ^{op},Set)}
\end{equation*}%
given by $\mathcal{U}\left( \left( A,\alpha \right) \right) \left(
C(X)\right) =\mathcal{X}\left( \left( C(X),\partial _{X}\right) ,\left(
A,\alpha \right) \right) $ is a full exact embedding functor where $\Gamma ^{op}$ is the opposite full subcategory determined by all finitely generated free crossed $P$-modules.

The proofs we give in more details are inspired by those given for
the corresponding case of algebras in the work of Shammu
\cite{Shammu} and  the proof of embedding theorem is simpler than
that given in Barr \cite{Barr}  for general exact categories.

\section{Crossed Modules}

Crossed modules of groups were initially defined by Whitehead
\cite{Wh1,Wh2} as models for (homotopy) $2$-types. Other related
studies with that concept may be found in \cite{Arslan, Arvasi,
Brown2, PORTER}. We recall from \cite{PORTER}
the definition of crossed modules of groups.

A crossed $P$-module, $(M,P,\partial )$, consists of groups $M$ and $P$ with
a left action of $P$ on $M$, written $\left( p,m\right) \mapsto $ $^{p}m$
and a group homomorphism $\partial :M\rightarrow P$ satisfying the following
conditions:%
\begin{equation*}
CM1)\text{\hspace{0.75cm}}\partial \left( ^{p}m\right) =p\partial \left(
m\right) p^{-1}\text{\qquad\ and \qquad }CM2)\hspace{0.75cm}\text{ }%
^{\partial \left( m\right) }n=mnm^{-1}\text{ }
\end{equation*}%
for $p\in P,m,n\in M$. We say that $\partial :M\rightarrow P$ is a
pre-crossed module, if it satisfies CM$1.$

If $(M,P,\partial )$ and $(M^{\prime },P^{\prime },\partial ^{\prime
})$ are crossed modules, a morphism
\begin{equation*}
\left( \mu ,\eta \right) :(M,P,\partial )\rightarrow (M^{\prime },P^{\prime
},\partial ^{\prime }),
\end{equation*}%
of crossed modules consists of group homomorphisms $\mu :M\rightarrow
M^{\prime }$ and $\eta :P\rightarrow P^{\prime }$ such that%
\begin{equation*}
(i)\text{ }\eta \partial =\partial ^{\prime }\mu \text{ \qquad and \qquad }%
(ii)\text{ }\mu \left( ^{p}m\right) ={^{\eta (p)}\mu \left( m\right)
\text{ }}
\end{equation*}%
for all $p\in P,m\in M.$

Crossed modules and their morphisms form a category, of course. It will
usually be denoted by \textsf{XMod}$.$ We also get obviously a category
\textsf{PXMod} of pre-crossed modules.

There is, for a fixed group $P$, a subcategory \textsf{XMod}$/P$ of \textsf{%
XMod}, which has as objects those crossed modules with $P$ as the
\textquotedblleft base\textquotedblright , i.e., all $(M,P,\partial )$ for
this fixed $P$, and having as morphisms from $\left( M,P,\partial \right) $
to $(M^{\prime },P^{\prime },\partial ^{\prime })$ those $\left( \mu ,\eta
\right) $ in \textsf{XMod} in which $\eta :P\rightarrow P^{\prime }$ is the
identity homomorphism on $P.$ In \textsf{XMod}$/P$, we use $(M,\partial )$ and $\mu$
instead of $(M,P,\partial )$ and $(\mu,id_P )$ respectively. \\

Some standart examples of crossed modules are:

\begin{enumerate}
\item[(i)] A conjugation crossed module is an inclusion of a normal subgroup $%
N\vartriangleleft G$, with action given by conjugation. In
particular, for any group $P$ the identity morphism
$id_P:P\rightarrow P$ is a crossed module with the action $P$ on
itself by conjugation. That is, an inclusion is replaced by a
homomorphism with special properties.

\item[(ii)] If $M$ \ is a group, its automorphism \ crossed module is given by $\chi :M\rightarrow Aut\left( M\right) $ where $\chi \left(
m\right)$ $\ $for $m,n \in M$ is the inner automorphism of $M$
mapping $n$ to $mnm^{-1}.$

\item[(iii)] The trivial crossed module $0:M\rightarrow P$ whenever $M$ is
a $P$-module.

\item[(iv)] A central extension crossed module, i.e. a surjective boundary
$ \mu
:M\rightarrow P $ with the kernel contained in the centre of $M$ and $%
p\in P$ acts on $m\in M$ \ by conjugation with any element of $\mu
^{-1}(p).$

\item[(v)] Any homomorphism $\left( \mu :M\rightarrow P\right) $ with $M$ abelian and $%
Im \mu $ in the centre of $P,$ provides a crossed module with $P$
acting trivially on $M.$
\end{enumerate}

\section{Categorical Structures of Crossed Modules of Groups}

We give some categorical structures of crossed modules of groups that will be used later in proofs. Some of these constructions, namely pullbacks and pushouts of crossed modules of groups may be found in the work of Brown \cite{Brown1}. But we will reconsider and deeply analyse them. Also we investigate the constructions of equaliser and coequaliser of crossed $P$-modules in \textsf{XMod}$/P$.

\begin{proposition} \label{ug3}
In \textsf{XMod}$/P$ every pair of morphisms with common domain and codomain
has an equaliser.
\end{proposition}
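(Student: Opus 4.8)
The plan is to construct the equaliser explicitly at the group level and check the crossed module axioms are inherited. Given two morphisms $\mu, \nu : (M,\partial) \rightarrow (M',\partial')$ in $\mathsf{XMod}/P$, I would set $N = \{m \in M : \mu(m) = \nu(m)\}$, the usual equaliser of $\mu$ and $\nu$ as group homomorphisms, with the inclusion $\iota : N \hookrightarrow M$. The first step is to observe that $N$ is a subgroup of $M$ that is stable under the $P$-action: if $m \in N$ then $\mu({}^{p}m) = {}^{p}\mu(m) = {}^{p}\nu(m) = \nu({}^{p}m)$, so ${}^{p}m \in N$, using condition $(ii)$ for morphisms in $\mathsf{XMod}/P$ (where $\eta = id_P$). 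Hence $P$ acts on $N$ by restriction.

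Next I would define the boundary $\partial_N : N \rightarrow P$ as the restriction $\partial|_N = \partial \circ \iota$. Condition CM1 for $(N,\partial_N)$ is immediate since it holds in $(M,\partial)$ and both the action and boundary on $N$ are restrictions. For CM2, given $m,n \in N$ we need ${}^{\partial_N(m)}n = mnm^{-1}$; but $\partial_N(m) = \partial(m)$ and the right-hand side is computed in $N \le M$, so this follows directly from CM2 in $(M,\partial)$. Thus $(N,\partial_N)$ is a crossed $P$-module and $\iota : (N,\partial_N) \rightarrow (M,\partial)$ is a morphism in $\mathsf{XMod}/P$ (it trivially commutes with $\partial$ and is $P$-equivariant).

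Finally I would verify the universal property. Clearly $\mu \iota = \nu \iota$ by construction. Given any crossed $P$-module $(L,\partial_L)$ and a morphism $\phi : (L,\partial_L) \rightarrow (M,\partial)$ with $\mu\phi = \nu\phi$, the image of $\phi$ lands in $N$, so $\phi$ factors uniquely as $\phi = \iota \psi$ through a group homomorphism $\psi : L \rightarrow N$; one then checks $\psi$ is $P$-equivariant (because $\iota$ is injective and $\phi$ is) and satisfies $\partial_N \psi = \partial_L$ (again because $\iota$ is monic and $\partial \phi = \partial_L$), so $\psi$ is a morphism in $\mathsf{XMod}/P$, and uniqueness is inherited from uniqueness at the group level.

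I do not expect a serious obstacle here: the construction is the evident one and every verification reduces to a restriction of a property already holding in $M$. The only point requiring a moment's care is the $P$-stability of $N$, which is exactly where the equivariance condition $(ii)$ on morphisms of $\mathsf{XMod}/P$ is used; everything else is routine diagram-chasing with injective arrows.
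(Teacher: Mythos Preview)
Your proof is correct and follows essentially the same approach as the paper: take the set-theoretic equaliser $N=\{m\in M:\mu(m)=\nu(m)\}$, equip it with the restricted boundary and $P$-action, and verify the universal property by factoring any competing cone through the inclusion. If anything, you are more careful than the paper, which asserts without comment that the inclusion is a crossed $P$-module morphism, whereas you explicitly check $P$-stability of $N$ and the axioms CM1, CM2.
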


\begin{proof}
Let $f,g:$ $\left( C,\partial \right) \rightarrow \left( D,\delta \right) $
be two morphisms of crossed $P$-modules. Let $E$ be the set $E=\{c\in
C:f(c)=g(c)\}.$ It is clear that
\[
\begin{array}{rl}
\varepsilon : & E\rightarrow P \\
& e\mapsto \partial u\left( e\right)
\end{array}%
\]%
is restriction of $\partial$ to $E$ where $u:E \rightarrow C$ is an inclusion. Also $u:\left(
E,\varepsilon \right) \rightarrow $ $\left( C,\partial
\right)$ is a morphism of crossed $P$-modules.

Suppose that there exist a crossed $P$-module, $\left( E^{\prime
},\varepsilon ^{\prime }\right) $ and a morphism $u^{\prime }:\left(
E^{\prime },\varepsilon ^{\prime }\right) \rightarrow $ $\left( C,\partial
\right) $ of crossed $P$-modules such that $fu^{\prime }=gu^{\prime }$. We can define
\[
\begin{array}{rrll}
\gamma : & E^{\prime }& \rightarrow & E \\
         & x          &  \mapsto     & u^{\prime } \left( x\right)
\end{array}%
\]
for all $x\in E^{\prime },$ since $f\left( u^{\prime }x\right) =g\left( u^{\prime
}x\right) ,$ and so $u^{\prime }x\in E.$ The fact that $u^{\prime }$ is a
morphism immediately gives that $\gamma $ is one as well. This morphism is
unique for the diagram
\begin{equation*}
\xymatrix@R=35pt@C=45pt{\ \ E \ar[r]^{u} \ar[dr]|(0.7){^{\varepsilon}} & C \ar[d]%
^\partial \ar@<-0.75ex>[r]_-g \ar@<0.75ex>[r]^-f & D \ar[dl]^-{%
\partial^{\prime}} & \\
\ E^{\prime } \ar[ur]|(0.7){_{u^{\prime}}} \ar@{-->}[u]^-{\gamma}
\ar[r]_-{\ \varepsilon^{\prime }} & P & }
\end{equation*}%
or more simply as
\begin{equation*}
\xymatrix{ (E,\varepsilon ) \ar[r]^{u} & ( C,\partial)
\ar@<-0.75ex>[r]_-g \ar@<0.75ex>[r]^-f & ( D,\partial^{\prime}) & \\
\ (E^{\prime },\varepsilon ^{\prime }) \ar[ur]_{u^{\prime}}
\ar@{-->}[u]^{\gamma} & & }
\end{equation*}%
to commute, i.e. $u\gamma =u^{\prime }.$ Hence $u$ is the equaliser of $%
f$ and $g$.
\end{proof}

\begin{proposition} \label{ug4}
In $\mathsf{XMod}/P$, every pair of morphisms of crossed modules
with common domain and codomain has a coequaliser.
\end{proposition}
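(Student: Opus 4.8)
Let $f,g:(C,\partial)\to(D,\delta)$ be two morphisms of crossed $P$-modules. Dually to the equaliser construction, the plan is to produce the coequaliser by quotienting the codomain. The obvious guess is to mod out $D$ by the normal subgroup generated by the elements $f(c)g(c)^{-1}$ for $c\in C$; but for this quotient to carry a crossed $P$-module structure over the same base $P$, the subgroup we factor out must be not merely normal but $P$-invariant, and the induced boundary must be well defined into $P$. So first I would set $N$ to be the smallest normal $P$-invariant subgroup of $D$ containing $\{f(c)g(c)^{-1}:c\in C\}$, that is, the normal closure in $D$ of the set $\{\,^{p}\!\big(f(c)g(c)^{-1}\big):p\in P,\ c\in C\}$. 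Write $Q=D/N$ and let $q:D\to Q$ be the projection.

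The key steps, in order, are as follows. (1) Check that $N\subseteq\ker(\delta)\cdot$\,(something)? — more precisely, observe that $\delta\big(f(c)g(c)^{-1}\big)=\partial(c)\partial(c)^{-1}=e$ by condition (i) for morphisms in $\mathsf{XMod}/P$, and that $\delta$ kills $\,^{p}$-conjugates of these by CM1, hence $N\subseteq\ker\delta$; therefore $\delta$ factors through $q$ to give a homomorphism $\bar\delta:Q\to P$. (2) Transport the $P$-action from $D$ to $Q$ (legitimate precisely because $N$ is $P$-invariant) and verify CM1 and CM2 for $(Q,\bar\delta)$, so that $(Q,\bar\delta)$ is a crossed $P$-module and $q:(D,\delta)\to(Q,\bar\delta)$ is a morphism with $qf=qg$. (3) Prove the universal property: given $h:(D,\delta)\to(D',\delta')$ in $\mathsf{XMod}/P$ with $hf=hg$, note $h$ annihilates each $f(c)g(c)^{-1}$ and, since $h$ is $P$-equivariant, annihilates their $\,^{p}$-conjugates, and since $\ker h$ is normal in $D$ it contains $N$; hence $h$ factors uniquely through $q$ as $h=\bar h\circ q$ with $\bar h:(Q,\bar\delta)\to(D',\delta')$, which is automatically a morphism of crossed $P$-modules and the unique such. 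Draw the commuting diagram
\begin{equation*}
\xymatrix{ ( C,\partial) \ar@<0.75ex>[r]^-f \ar@<-0.75ex>[r]_-g & ( D,\delta) \ar[r]^-{q} \ar[dr]_-{h} & (Q,\bar\delta) \ar@{-->}[d]^-{\bar h} \\ & & (D',\delta') }
\end{equation*}
to display the factorisation.

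**Main obstacle.** The routine part is the universal property; the delicate point is step (2) — verifying that $(Q,\bar\delta)$ actually satisfies CM2, i.e. $\,^{\bar\delta(\bar m)}\bar n=\bar m\,\bar n\,\bar m^{-1}$ in $Q$. This does not follow formally from CM2 in $D$ together with surjectivity of $q$ unless one is careful: one must check that the relation descends, which it does because $q$ is $P$-equivariant and a group homomorphism, so both sides of CM2 in $D$ map to the corresponding sides in $Q$. Equally, one should make sure the $P$-action on $Q$ is well defined, which is exactly why $N$ was taken $P$-invariant rather than merely normal — had we used only the normal closure of $\{f(c)g(c)^{-1}\}$, the action $\,^{p}(dN)=(\,^{p}d)N$ might not be well posed. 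Once $N$ is chosen as the normal $P$-closure, all the crossed-module axioms for $Q$ reduce to straightforward transport-of-structure checks, and the proposition follows.
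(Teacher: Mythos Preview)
Your proof is correct and follows essentially the same construction as the paper: quotient the codomain by the normal closure of $\{f(c)g(c)^{-1}:c\in C\}$, check that this subgroup lies in $\ker\delta$, and verify the universal property.

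One remark on your ``main obstacle'': you take $N$ to be the normal \emph{$P$-invariant} closure, whereas the paper simply takes the ordinary normal closure. In fact these coincide, because the generating set is already $P$-stable: since $f$ and $g$ are $P$-equivariant, $^{p}\!\big(f(c)g(c)^{-1}\big)=f({}^{p}c)\,g({}^{p}c)^{-1}$ is again a generator. Hence the normal closure is automatically $P$-invariant, and your extra care, while certainly not wrong, turns out to be unnecessary. The paper's proof is terser and omits the explicit verification of CM1/CM2 on the quotient that you (rightly) flag; your version fills in those details, but the underlying argument is the same.
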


\begin{proof}
Let $f,g:(C,\partial )\rightarrow (B,\beta )$ be two morphisms of crossed $P$%
-modules. Let $N$ be the normal subgroup in $B$ generated by all the
elements of the form $f\left( c\right) g\left( c\right) ^{-1}$ for
$c\in C. $ Since $N\subseteq Ker\beta$, we define crossed module
\[
\begin{array}{rl}
\overline{\beta }: & \overline{B} \rightarrow P \\
& Nb\mapsto \beta \left( b\right)
\end{array}%
\]
for each element $b \in B $ where $\overline{B}=B/N.$ By the
definition of $\left( \overline{B},\overline{\beta}\right) $, the
morphism $p:$ $(B,\beta )\rightarrow
\left(\overline{B},\overline{\beta}\right) $ is the induced
projection and it is a morphism of crossed $P$-modules. Suppose
there exist a crossed $P$-module, $(B^{\prime },\beta ^{\prime })$
say, and a morphism of crossed $P$-modules, $p^{\prime }:(B,\beta
)\rightarrow (B^{\prime },\beta ^{\prime })$ such that $p^{\prime
}f=p^{\prime }g,$ then there exists a unique $\varphi :\left( \overline{B},\overline{%
\beta}\right) \rightarrow (B^{\prime },\beta ^{\prime })$ given by
$\varphi \left( Nb\right) =p^{\prime }(b),$ satisfying $\varphi
p=p^{\prime }.$ We will get the following commutative diagram:
\[
\xymatrix@R=35pt@C=45pt{ C \ar@<-0.75ex>[r]_-g \ar@<0.75ex>[r]^-f
\ar[dr]_\partial & B
\ar[d]_\beta \ar[r]^p \ar[dr]|(0.7){^{p^{\prime}}}  & \overline{B} \ar[d]^\varphi \ar[dl]|(0.7){^{\overline\beta}} & \\
\ & P & B^{\prime} \ar[l]^{\beta^{\prime}} & }
\] or more simply as
\[
\xymatrix{ (C,\partial) \ar@<-0.75ex>[r]_-g \ar@<0.75ex>[r]^-f &
(B,\beta) \ar[dr]_{p^{\prime}}
\ar[r]^p & {(\overline{B},%
\overline{\beta})} \ar@{-->}[d]^{\varphi} & \\
\ & & {(B^{\prime},\beta^{\prime}).} }
\]
Therefore $p$ is the coequaliser of $f$ and $g$.
\end{proof}

\begin{proposition} \label{ug2}
The category \textsf{XMod}$/P$ of crossed $P$-modules of groups has
pullbacks.
\end{proposition}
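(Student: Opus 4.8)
The plan is to realise the pullback by the usual fibre-product construction carried out at the level of the ``top'' groups, equipped with the diagonal $P$-action and the common induced boundary into $P$. Concretely, given morphisms $f:(C,\partial )\rightarrow (A,\alpha )$ and $g:(B,\beta )\rightarrow (A,\alpha )$ in \textsf{XMod}$/P$, I would set
\[
C\times _{A}B=\{(c,b)\in C\times B:f(c)=g(b)\},
\]
a subgroup of the product group $C\times B$, and let $P$ act on it diagonally, $^{p}(c,b)=(\,^{p}c,\,^{p}b)$. This action is well defined because $f$ and $g$ are $P$-equivariant, so $f(\,^{p}c)=\,^{p}f(c)=\,^{p}g(b)=g(\,^{p}b)$, whence $^{p}(c,b)$ again lies in the fibre product.

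Next I would define the boundary $\pi :C\times _{A}B\rightarrow P$ by $\pi (c,b)=\partial (c)$. The point here --- and the only place the hypothesis ``over the fixed $P$'' is used --- is that $\alpha f=\partial$ and $\alpha g=\beta$, so on the fibre product $\partial (c)=\alpha f(c)=\alpha g(b)=\beta (b)$; thus $\pi$ may equally be described through either projection, which is exactly what makes the two crossed-module axioms come out symmetrically. Verification of CM$1$ is immediate from CM$1$ for $\partial$, namely $\pi (\,^{p}(c,b))=\partial (\,^{p}c)=p\partial (c)p^{-1}=p\,\pi (c,b)\,p^{-1}$; for CM$2$ one computes $^{\pi (c,b)}(c^{\prime },b^{\prime })=(\,^{\partial (c)}c^{\prime },\,^{\beta (b)}b^{\prime })=(cc^{\prime }c^{-1},bb^{\prime }b^{-1})=(c,b)(c^{\prime },b^{\prime })(c,b)^{-1}$, using CM$2$ for $\partial$ in the first coordinate and for $\beta$ in the second. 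Hence $(C\times _{A}B,\pi )$ is a crossed $P$-module, and the two coordinate projections $p_{1}:(C\times _{A}B,\pi )\rightarrow (C,\partial )$ and $p_{2}:(C\times _{A}B,\pi )\rightarrow (B,\beta )$ are morphisms in \textsf{XMod}$/P$ --- they are $P$-equivariant coordinatewise and commute with the boundaries by the very definition of $\pi$ --- and they satisfy $fp_{1}=gp_{2}$.

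Finally I would check the universal property. Given a crossed $P$-module $(Q,\kappa )$ together with morphisms $q_{1}:(Q,\kappa )\rightarrow (C,\partial )$ and $q_{2}:(Q,\kappa )\rightarrow (B,\beta )$ with $fq_{1}=gq_{2}$, define $h:Q\rightarrow C\times _{A}B$ by $h(x)=(q_{1}(x),q_{2}(x))$: the condition $fq_{1}=gq_{2}$ says that $h$ lands in the fibre product, $h$ is a group homomorphism and is $P$-equivariant coordinatewise, and $\pi h(x)=\partial q_{1}(x)=\kappa (x)$, so $h:(Q,\kappa )\rightarrow (C\times _{A}B,\pi )$ is a morphism of crossed $P$-modules with $p_{i}h=q_{i}$. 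Uniqueness is forced, since $p_{1}h^{\prime }=q_{1}$ and $p_{2}h^{\prime }=q_{2}$ already pin down both coordinates of $h^{\prime }(x)$. I do not expect a genuine obstacle in this argument; the only subtlety worth spelling out carefully is the well-definedness and consistency of $\pi$ --- i.e. that the diagonal action together with the single boundary $\partial (c)=\beta (b)$ really does satisfy CM$1$ and CM$2$ --- since everything else is the standard set-theoretic pullback of groups carrying a coherent $P$-action.
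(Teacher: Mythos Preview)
Your proposal is correct and follows essentially the same fibre-product construction as the paper. The only presentational difference is that the paper first verifies the fibre product is a crossed module over the common codomain (with boundary $(c,d)\mapsto f(c)=g(d)$, using the action via $\beta$) before passing to the induced boundary into $P$, whereas you go directly to the crossed $P$-module structure and also spell out the universal property that the paper merely asserts.
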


\begin{proof}
Let $f:$ $\left( C,\partial \right) \rightarrow \left( B,\beta \right)$
and $g:$ $\left( D,\delta \right) \rightarrow \left( B,\beta \right) $ be
two morphisms of crossed $P$-modules. Then $\left( C,f\right) $ and $\left(
D,g\right) $ are crossed $B$-modules, where $B$ acts on $C$ and $D$ via $%
\beta $. Form the group
\begin{equation*}
X=C\times _{B}D=\left\{ \left( c,d\right) :f(c)=g(d)\right\} \subseteq
C\times D
\end{equation*}%
Define the morphism $\chi :C\times _{B}D\rightarrow B$ by $\chi \left(
c,d\right) =f(c)=g(d).$ We will show that this is a crossed $B$-module.%
\begin{equation*}
\begin{array}{rcl}
^{\chi (c,d)}(c^{\prime },d^{\prime }) & = & ^{f(c)}(c^{\prime
},d^{\prime })
\\
& = & \left( ^{f(c)}c^{\prime },^{f(c)}d^{\prime }\right)  \\
& = & \left( ^{f(c)}c^{\prime },^{g(d)}d^{\prime }\right)  \\
& = & \left( ^{\beta f(c)}c^{\prime },^{\beta g(d)}d^{\prime }\right)  \\
& = & \left( ^{\partial (c)}c^{\prime },^{\delta (d)}d^{\prime }\right)  \\
& = & \left( cc^{\prime }c^{-1},dd^{\prime }d^{-1}\right)  \\
& = & (c,d)(c^{\prime },d^{\prime })(c,d)^{-1}.%
\end{array}%
\end{equation*}%
Therefore $\left( X,\chi \right) $ is a crossed $B$-module. We can
define
\[
\begin{array}{rl}
\chi ^{\prime }: & X\rightarrow P \\
& (c,d)\mapsto \partial (c)=\delta (d)%
\end{array}%
\]
since $\partial(c)=\beta f(c)=\beta g(d)=\delta (d).$ Crossed module
conditions are immediate. Thus $\left( X,\chi \right) $ is a crossed
$P$-module.

There are two induced morphisms
\begin{equation*}
p:\left( X,\chi \right) \rightarrow \left( C,\partial \right) \qquad
q:\left( X,\chi \right) \rightarrow \left( D,\delta \right)
\end{equation*}%
given by the projections; note that $fp=gq,$ this shows that the diagram%
\begin{equation*}
\xymatrix{ \left( X,\chi \right) \ar[r]^p \ar[d]_q & \left( C,\partial \right) \ar[d]^f & \\ \ \left( D,\delta \right) \ar[r]_g & \left( B,\beta \right) & }
\end{equation*}%
commutes. This construction satisfies a universal property. Therefore
\textsf{XMod}$/P$ has pullbacks.
\end{proof}

\begin{proposition}
The category \textsf{XMod}$/P$ has pushouts.
\end{proposition}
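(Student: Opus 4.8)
The plan is to build the pushout explicitly, imitating the pushout of groups and then imposing the Peiffer condition CM2. Let $f:(C,\partial)\to(A,\alpha)$ and $g:(C,\partial)\to(B,\beta)$ be morphisms of crossed $P$-modules. First I would form the free product $A*B$ together with the group
\[
G=(A*B)\big/\big\langle\!\big\langle\,f(c)\,g(c)^{-1}:c\in C\,\big\rangle\!\big\rangle ,
\]
the pushout of $f$ and $g$ in the category of groups. Since $f$ and $g$ are $P$-equivariant, the normal closure that is factored out is carried to itself by the $P$-action on $A*B$ induced by the actions on $A$ and on $B$, so $P$ acts on $G$. As $\alpha f=\partial=\beta g$, the homomorphisms $\alpha$ and $\beta$ induce a homomorphism $\theta:G\to P$, and CM1 holds for $\theta$ because both sides of the identity $\theta({}^{p}x)=p\,\theta(x)\,p^{-1}$, regarded as functions of $x$, are homomorphisms agreeing on the generators $A$ and $B$. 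Thus $(G,\theta)$ is a pre-crossed $P$-module, with canonical morphisms $(A,\alpha)\to(G,\theta)$ and $(B,\beta)\to(G,\theta)$ whose composites with $f$ and $g$ coincide.

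Next I would pass to the associated crossed module. Let $\langle G,G\rangle$ be the Peiffer subgroup of $(G,\theta)$, the subgroup generated by all Peiffer elements $xyx^{-1}({}^{\theta(x)}y)^{-1}$ with $x,y\in G$. It is standard that the Peiffer subgroup of any pre-crossed module is normal; CM1 gives $\langle G,G\rangle\subseteq\ker\theta$, and the short computation ${}^{p}\big(xyx^{-1}({}^{\theta(x)}y)^{-1}\big)=({}^{p}x)({}^{p}y)({}^{p}x)^{-1}\big({}^{\theta({}^{p}x)}({}^{p}y)\big)^{-1}$ shows it is $P$-invariant. Hence $Q=G/\langle G,G\rangle$ carries an induced $P$-action and an induced homomorphism $\psi:Q\to P$, and $(Q,\psi)$ is a crossed $P$-module: CM1 is inherited and CM2 holds by the very choice of the subgroup killed. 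Let $i:(A,\alpha)\to(Q,\psi)$ and $j:(B,\beta)\to(Q,\psi)$ be the composites of $A\hookrightarrow G$ and $B\hookrightarrow G$ with the projection $G\to Q$; these are morphisms of crossed $P$-modules, and $if=jg$ because $f(c)=g(c)$ already holds in $G$.

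Finally I would verify the universal property. Given a crossed $P$-module $(R,\rho)$ and morphisms $a:(A,\alpha)\to(R,\rho)$, $b:(B,\beta)\to(R,\rho)$ with $af=bg$, the universal property of $G$ in groups yields a unique homomorphism $h:G\to R$ with $h|_{A}=a$ and $h|_{B}=b$; it is $P$-equivariant and satisfies $\rho h=\theta$, both claims holding on the generators. Since $(R,\rho)$ satisfies CM2 and $\rho h=\theta$, the homomorphism $h$ sends each Peiffer element $xyx^{-1}({}^{\theta(x)}y)^{-1}$ to $h(x)h(y)h(x)^{-1}\big({}^{\rho(h(x))}h(y)\big)^{-1}=1$, so $h$ factors uniquely through $G\to Q$, producing the unique morphism $(Q,\psi)\to(R,\rho)$ compatible with $i,j,a,b$. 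Therefore $(Q,\psi)$ with $i$ and $j$ is the pushout, and \textsf{XMod}$/P$ has pushouts.

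The step that needs the most care is the verification that $\langle G,G\rangle$ is normal, $P$-invariant and contained in $\ker\theta$: this is precisely the point at which the pre-crossed module coming from the group-level pushout is turned into a genuine crossed module, while the rest merely transports the classical group-theoretic pushout argument. Alternatively one can package this abstractly by noting that the Peiffer-quotient assignment is the reflector of the inclusion of \textsf{XMod}$/P$ into \textsf{PXMod}$/P$, so the pushout in \textsf{XMod}$/P$ is the reflection of the easily described pushout in \textsf{PXMod}$/P$; but the explicit construction above is more in keeping with the other results of this section and with Brown's treatment in \cite{Brown1}.
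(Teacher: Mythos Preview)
Your construction is correct: forming the group-theoretic pushout $G=(A*B)/\langle\!\langle f(c)g(c)^{-1}\rangle\!\rangle$, equipping it with the induced $P$-action and boundary $\theta$ to obtain a pre-crossed $P$-module, and then dividing by the Peiffer subgroup is exactly the right recipe, and your verification of the universal property is sound. The one place to be slightly more explicit is the normality of $\langle G,G\rangle$: the computation you give handles $P$-invariance, but normality in $G$ uses a separate identity (conjugating a Peiffer commutator by $z\in G$ expresses it as a product of Peiffer commutators), which you cover by the phrase ``it is standard''. That is acceptable, though a one-line reference or the identity itself would strengthen the write-up.

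As for comparison with the paper: there is nothing to compare. The paper states the proposition and gives no proof whatsoever, relying on the remark at the start of Section~2 that pushouts of crossed modules appear in Brown--Huebschmann~\cite{Brown1}. Your argument therefore supplies what the paper omits, and your closing observation---that the construction can be repackaged as ``take the pushout in \textsf{PXMod}$/P$ and apply the Peiffer reflector''---is a clean conceptual summary that matches the spirit of the free crossed module construction the paper gives immediately afterwards.
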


\subsection{Free Crossed Modules}

The definition and construction of a free crossed $P$-module is due
to Whitehead \cite{Wh2}. But we rewrite them from \cite{Brown1}.

Let
$\left(
A,\delta \right) $ be a pre-crossed $P$-module, let $X$ be a set and let $%
\nu :X\rightarrow A$ be a function. We say $\left( A,\delta \right) $ is a
free pre-crossed $P$-module with basis $\nu $ if for any pre-crossed $P$%
-module $\left( A^{\prime },\delta ^{\prime }\right) $ and function $\nu
^{\prime }:X\rightarrow A^{\prime }$ such that $\delta ^{\prime }\nu
^{\prime }=\delta \nu ,$ there is a unique morphism $\phi :\left( A,\delta
\right) \rightarrow \left( A^{\prime },\delta ^{\prime }\right) $ of pre-crossed $P$-modules such that $\phi \nu =\nu ^{\prime }.$ In such case,
we also emphasise the role of the function $\omega=\delta \nu :X\rightarrow P$ by
calling $\left( A,\delta \right) $ with the function $\nu ,$ a free
pre-crossed $P$-module on $\omega$.

If $\left(
A,\delta \right) $ is a crossed $P$-module and $\left(
A,\delta \right) $ with $\nu$ has the above universal property for morphisms into crossed $P$-modules, then we call $\left(
A,\delta \right) $ a free crossed $P$-module with basis $\nu$ (or on $\omega$).
\[
\xymatrix{ & & A^{\prime} \ar@/^/[ddl]^{\delta ^{\prime }} & \\
\ X \ar[r]^\nu \ar[dr]_{\omega} \ar@/^/[urr]^{\nu ^{\prime }} & A \ar[d]^\delta \ar@{-->}[ur]^-{\phi} & & \\
\ &  P. & & }
\]

\begin{proposition}
Let $P$ be a group, $X$ a set and $\omega:X\rightarrow P$ a function.
Then a free pre-crossed $P$-module on $\omega$, and a free crossed
$P$-module on  $\omega$, exist, and are each uniquely determined up to
isomorphism.
\end{proposition}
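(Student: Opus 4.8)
The plan is to construct the free pre-crossed module explicitly, then obtain the free crossed module by a quotient, and finally argue uniqueness by the usual universal-property nonsense.

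First I would build the free pre-crossed $P$-module on $\omega:X\to P$. The natural candidate is the group
\[
A = \ast_{x\in X} \mathbb{Z}
\]
freely generated by copies of $\mathbb{Z}$ indexed by $X$ — more precisely, take the free product of copies of $P$ indexed by $X$ modulo the relation identifying generators appropriately; the cleanest choice is to let $A$ be the free group on the set $P\times X$ and quotient by the relations that make it a $P$-group with the diagonal-type action, so that $A\cong \bigast_{x\in X}\mathbb Z[P]$-style, i.e. $A$ is the free group on pairs $(p,x)$ with $P$ acting by $q\cdot(p,x)=(qp,x)$. Define $\delta:A\to P$ on generators by $\delta(p,x)=p\,\omega(x)\,p^{-1}$ and extend multiplicatively; condition CM1 holds by construction. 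The basis function is $\nu:X\to A$, $\nu(x)=(1,x)$, so $\delta\nu=\omega$. For the universal property: given a pre-crossed $P$-module $(A',\delta')$ and $\nu':X\to A'$ with $\delta'\nu'=\omega$, send $(p,x)\mapsto {}^{p}\nu'(x)$; this is well defined on the free group, is $P$-equivariant, commutes with the boundaries (using CM1 for $\delta'$), and is the unique such morphism since the $(p,x)$ generate $A$. This establishes existence of the free pre-crossed $P$-module.

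Next I would pass to the free crossed $P$-module. Let $S\trianglelefteq A$ be the subgroup generated by all \emph{Peiffer elements} $\langle a,a'\rangle = a a' a^{-1}\,({}^{\delta(a)}a')^{-1}$ for $a,a'\in A$ (the Peiffer subgroup); one checks $S$ is a normal, $P$-invariant subgroup contained in $\ker\delta$, exactly as in the discussion preceding Proposition~\ref{ug4}. Set $A^{\mathrm{cr}}=A/S$ with the induced action and induced boundary $\partial:A^{\mathrm{cr}}\to P$; now CM2 holds by the very definition of $S$, so $(A^{\mathrm{cr}},\partial)$ is a genuine crossed $P$-module, with basis $\bar\nu = \pi\circ\nu$ where $\pi:A\to A^{\mathrm{cr}}$ is the projection. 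Its universal property among crossed $P$-modules follows formally: any $\nu':X\to A'$ into a crossed module $(A',\delta')$ induces $\phi:A\to A'$ as above, and since $(A',\delta')$ satisfies CM2 the Peiffer elements are killed, so $\phi$ factors uniquely through $\pi$.

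Uniqueness up to isomorphism in both cases is the standard argument: two objects with the same universal property admit mutually inverse comparison morphisms, obtained by applying each universal property to the basis function of the other, with the composites forced to be identities by the uniqueness clause. I do not expect any serious obstacle here; the only delicate points are the purely computational verifications that $\delta$ (resp.\ $\partial$) is a well-defined homomorphism satisfying CM1 (resp.\ CM1 and CM2), that the action descends to the quotient, and that $S\subseteq\ker\delta$ — and the last of these is immediate because $\delta(\langle a,a'\rangle)=\delta(a)\delta(a')\delta(a)^{-1}\delta({}^{\delta(a)}a')^{-1}=1$ by CM1. So the main ``hard part'' is really just bookkeeping: choosing the presentation of $A$ so that the $P$-action and the formula $\delta(p,x)=p\,\omega(x)\,p^{-1}$ are manifestly compatible, after which everything else is routine.
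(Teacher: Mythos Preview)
Your construction is correct and matches the paper's: take the free group on $P\times X$ with $P$ acting by left translation on the first factor, define the boundary by $(p,x)\mapsto p\,\omega(x)\,p^{-1}$ and the basis by $x\mapsto(1,x)$, then quotient by the Peiffer subgroup for the crossed case, with uniqueness by the standard universal argument. The only cosmetic issue is the meandering opening (the remarks about $\ast_{x\in X}\mathbb{Z}$ and about quotienting to obtain a $P$-group are unnecessary --- the free group on $P\times X$ already carries the action), but once you settle on that description the proof is essentially identical to the paper's.
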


\begin{proof}
We let $H$ be the free group on the set $X\times P,$ and we write
the elements of this set as $^{u}\rho ,\rho \in X,u\in P.$ Let $P$
acts on $H$ by acting on the generators as $^{v}\left( ^{u}\rho
\right) ={^{v}\rho} ,\rho \in X$ and $u,v \in P.$ Define $\theta
:H\rightarrow P$ by its values on
the generators%
\[
\theta \left( ^{u}\rho \right) =u\omega \left( \rho\right) u^{-1}
\]%
$\rho \in X,u\in P.$ Define $\nu :X\rightarrow H$ by $\nu \left(
\rho \right) =\rho ^{1},\rho \in P,$ so that $\theta \nu =\omega .$
Then $\left( H,\theta \right) $ is a pre-crossed $P$-module, which,
with $\nu ,$ is easily checked to be a free pre-crossed $P$-module
on $\omega .$

From $\left( H,\theta \right) $ we can form a crossed $P$-module $\left(
C,\partial \right) $ by factoring out the Peiffer elements. Then $\left(
C,\partial \right) $ with the composite $X\rightarrow H\rightarrow C$ is a
free crossed $P$-module on $\omega$.

\quad The uniqueness of these constructions up to isomorphism follows by
the\ usual universal argument.
\end{proof}

\subsection{\bigskip Singly Generated Free Crossed Modules}

Singly generated free crossed modules will have important role in the embedding theorem.

Suppose that for each $x\in P$, we take the symbol $\overline{x}$ and form the singly generated free crossed $P$%
-module $(C(\overline{x}),\partial _{x})$ on $\left( \{\overline{x}\},\omega
_{x}\right) $, where $\omega _{x}$ takes $\overline{x}\mapsto x\in P$ and $%
\partial _{x}(\overline{x})=\omega _{x}\left( \overline{x}\right) =x$.
We note that the Peiffer normal subgroup $P(\overline{x})$ is
generated by all the elements of the form $\{\overline{x}$
$\overline{x}$ $\overline{x}^{-1}(^{x}\overline{x})^{-1}\mid x\in
P\}.$ Thus each element in $C(\overline{x})$ will have the form
\[
c=P(\overline{x}){^p\overline{x}}\text{ \ for some }p\in P.
\]

\section {$\mathsf{XMod}$/P is an Exact Category}

Barr \cite{Barr} defined exact categories and used them to generalise embedding theorem the sense of Mitchell's to a non-abelian context.
We will give the definition and examples of exact category having appeared in Barr \cite{Barr}.

Let \textsf{X} be a category. We say \textsf{X }is regular if it
satisfies $EX1)$ below and exact if it satisfies $EX2)$ in addition.

$EX1)$ The kernel pair of every morphism exists and has a coequaliser.

$EX2)$ Every equivalence relation is effective that is every equivalence relation is a
kernel pair.

Examples of exact categories:
\begin{enumerate}
\item[(i)] The category \textsf{Set }of sets.

\item[(ii)] The category of non empty sets.

\item[(iii)]Any abelian category.

\item[(iv)]Every partially ordered set considered as a category.

\item[(v)] For any small category \textsf{C}, the functor category (\textsf{C}$^{%
\text{\textsf{op}}}$,\textsf{Set}).
\end{enumerate}

\begin{proposition}
In \textsf{XMod}$/P$ every morphism has a kernel pair and the kernel
pair has a coequaliser.
\end{proposition}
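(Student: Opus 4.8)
The plan is to deduce both assertions directly from the two existence theorems already proved, since the kernel pair of a morphism is a particular pullback and a coequaliser of a parallel pair of morphisms is exactly what Proposition \ref{ug4} supplies.

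First I would recall that the kernel pair of a morphism $f:(C,\partial)\to(B,\beta)$ of crossed $P$-modules is, by definition, the pullback of $f$ along itself. By Proposition \ref{ug2} this pullback exists, so I would take
\[
X \;=\; C\times_B C \;=\; \{(c,c')\in C\times C : f(c)=f(c')\},
\]
with boundary $\chi(c,c')=\partial(c)=\partial(c')$ — the two values agree because $\partial=\beta f$ — with $P$ acting diagonally, and with the two projections $p,q:(X,\chi)\to(C,\partial)$. The universal property of this pullback square is precisely the universal property required of the kernel pair of $f$, so $(X,\chi)$ together with $p$ and $q$ is the kernel pair.

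Next, $p$ and $q$ constitute a pair of morphisms of crossed $P$-modules with common domain $(X,\chi)$ and common codomain $(C,\partial)$, so Proposition \ref{ug4} immediately yields a coequaliser. Concretely it is the projection $(C,\partial)\to(C/N,\overline\partial)$ with $N$ the normal subgroup of $C$ generated by the elements $p(x)q(x)^{-1}=c(c')^{-1}$ for $x=(c,c')\in X$; since $f(c)=f(c')$ forces $c(c')^{-1}\in\ker f$ and, conversely, $(kc,c)\in X$ for every $k\in\ker f$, this normal subgroup is just $\ker f$, which indeed lies in $\ker\partial$ as required for the construction in Proposition \ref{ug4}.

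There is essentially no hard step here: the content of the proposition is that $\mathsf{XMod}/P$ satisfies axiom $EX1)$, and both ingredients — existence of pullbacks and existence of coequalisers — are already in hand. The only point deserving a word is the identification of the pullback of $f$ along itself with the kernel pair, which is immediate from comparing the two universal properties, together with the remark (contained in Proposition \ref{ug2}) that the projections $p$ and $q$ are genuinely morphisms of crossed $P$-modules.
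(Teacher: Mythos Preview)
Your proof is correct and follows essentially the same route as the paper: invoke Proposition~\ref{ug2} to obtain the kernel pair as the pullback of $f$ along itself, then invoke Proposition~\ref{ug4} to coequalise the resulting parallel pair. Your additional identification of the normal subgroup $N$ with $\ker f$ is a pleasant extra, but the paper's own proof omits it and stops at the bare appeal to the two earlier propositions.
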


\begin{proof}
Let $\ f:\left( A,\alpha \right) \rightarrow \left( B,\beta \right) $ \ be a
morphism of crossed $P$-modules. Since \textsf{XMod}$/P$ has pullbacks by proposition \ref{ug2}, then
the pullback diagram of the pair equal morphisms $f:A\rightarrow B$, is the
diagram
\begin{equation*}
\xymatrix { A\times _{B}A \ar[r]^-{p_1} \ar[d]_{p_2} & A \ar[d]^f \\ \
A \ar[r]_f & B }
\end{equation*}%
where $A\times _{B}A=\left\{ \left( a,a^{\prime }\right) \mid f(a)=f(a^{\prime
})\right\}$.

Then $(p_{1},p_{2})$ is the kernel pair of the morphisms $f$ where
$p_{1}(a,a^{\prime })=a$ and $p_{2}(a,a^{\prime })=a^{\prime }$.
Since every pair of morphisms of crossed modules with common domain
and codomain has a coequaliser in \textsf{XMod}$/P$ by proposition
\ref{ug4}, the pair $(p_{1},p_{2})$ has a coequaliser.
\end{proof}

Now we will prove that \textsf{XMod}$/P$ satisfies axiom EX2 above.

\begin{proposition}
Every equivalence relation $(E,\varepsilon) \overset{u}{\underset{v}{\rightrightarrows }}(A,\alpha)$ in \textsf{XMod}$/P$ is effective.
\end{proposition}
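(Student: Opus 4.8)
The plan is to show that $(u,v)$ is the kernel pair of its own coequaliser. As a preliminary I would record that every monomorphism $\mu\colon(M,\partial)\to(M',\partial')$ of crossed $P$-modules is injective on the top group: if $\mu(m_1)=\mu(m_2)$ then $\partial(m_1)=\partial'\mu(m_1)=\partial'\mu(m_2)=\partial(m_2)=:x$, and the universal property of the singly generated free crossed $P$-module $(C(\overline{x}),\partial_x)$ on $(\{\overline{x}\},\omega_x)$ yields morphisms $\phi_i\colon(C(\overline{x}),\partial_x)\to(M,\partial)$ with $\phi_i(\overline{x})=m_i$; since $\mu\phi_1$ and $\mu\phi_2$ agree on $\overline{x}$, uniqueness forces $\mu\phi_1=\mu\phi_2$, hence $\phi_1=\phi_2$ and $m_1=m_2$. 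The product $(A,\alpha)\times(A,\alpha)$ in \textsf{XMod}$/P$ is, since $(P,id_P)$ is terminal and by Proposition~\ref{ug2}, the pullback $(A\times_P A,\chi')$ of $\alpha$ along $\alpha$, with $\chi'(a,a')=\alpha(a)=\alpha(a')$. As $(u,v)$ is jointly monic, the induced map $(E,\varepsilon)\to(A\times_P A,\chi')$ is a monomorphism, hence injective, so I may identify $(E,\varepsilon)$ with a crossed $P$-submodule $R\leq A\times_P A$ on which $u,v$ become the two coordinate projections and $\varepsilon=\chi'|_R$.

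Next I would put $N:=\{a\in A\colon (a,1)\in R\}$. Using reflexivity — i.e. $(a,a)\in R$ for every $a\in A$ — one verifies that $N$ is a normal, $P$-invariant subgroup of $A$ contained in $\ker\alpha$, and that it coincides with the normal subgroup of $A$ generated by $\{u(e)v(e)^{-1}\colon e\in E\}=\{ab^{-1}\colon(a,b)\in R\}$; indeed $(a,b),(b,b)\in R$ force $(ab^{-1},1)\in R$, and the reverse inclusion is immediate. (Symmetry and transitivity of the relation are not actually needed here: a reflexive subgroup of $A\times A$ is automatically symmetric and transitive.) By Proposition~\ref{ug4} the coequaliser of $u$ and $v$ is then the projection $q\colon(A,\alpha)\to(A/N,\overline{\alpha})$ with $\overline{\alpha}(Na)=\alpha(a)$.

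It then remains to identify the kernel pair of $q$ with $R$. By Proposition~\ref{ug2} the kernel pair of $q$ is the pullback of $q$ with itself, namely $\{(a,a')\colon Na=Na'\}\leq A\times_P A$ (with structure map the restriction of $\chi'$) together with its two projections. For $R\subseteq\{(a,a')\colon Na=Na'\}$: if $(a,a')\in R$ then $(a,a')(a',a')^{-1}=(a(a')^{-1},1)\in R$, so $a(a')^{-1}\in N$ and $Na=Na'$. For the reverse inclusion: if $Na=Na'$ then $a(a')^{-1}\in N$, hence $(a(a')^{-1},1)\in R$, and $(a(a')^{-1},1)(a',a')=(a,a')\in R$. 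Thus $R$ is exactly the kernel pair of $q$ as a subcrossed module of $A\times_P A$, and under the identification of the first paragraph $(u,v)$ is precisely that kernel pair. Since $q$ is the coequaliser of $(u,v)$, the equivalence relation $(E,\varepsilon)\overset{u}{\underset{v}{\rightrightarrows}}(A,\alpha)$ is a kernel pair, i.e. effective.

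The bookkeeping — that $R$ is a subgroup, that $N$ is a $P$-invariant normal subgroup lying in $\ker\alpha$, that $\overline{\alpha}$ and the projections are morphisms of crossed $P$-modules — is routine. The one genuinely non-formal ingredient, and the step I expect to carry the argument, is the correspondence between internal equivalence relations and normal subcrossed-modules via $N=\{a\colon(a,1)\in R\}$, that is, the two inclusions yielding $R=A\times_{A/N}A$; this rests on the preliminary observation that monomorphisms of crossed $P$-modules are injective, which is what legitimizes the element-wise reasoning throughout.
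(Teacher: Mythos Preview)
Your proof is correct and follows the same overall strategy as the paper: form the quotient of $(A,\alpha)$ determined by the relation and show that $(u,v)$ is the kernel pair of the projection. The paper works directly with equivalence classes $[a]_E$ and builds $(A/E,\overline{\alpha})$ from scratch, whereas you translate the relation into the normal subgroup $N=\{a:(a,1)\in R\}$ and invoke Proposition~\ref{ug4} to obtain the coequaliser $A\to A/N$; since $N$ is exactly the normal closure of $\{u(e)v(e)^{-1}\}$, these produce the same quotient, and both arguments finish with the same two inclusions identifying $R$ with $A\times_{A/N}A$.

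Two points in your write-up are genuine additions beyond what the paper records. First, the paper silently treats $E$ as a subset of $A\times_P A$ and reasons with pairs; your preliminary lemma that monomorphisms in \textsf{XMod}$/P$ are injective (via the singly generated free crossed $P$-modules) is precisely what licenses that element-wise reasoning, and the paper omits it. Second, your remark that only reflexivity is needed---a reflexive sub-crossed-module of $A\times_P A$ being automatically a congruence, the Mal'cev phenomenon for groups---is sharper than the paper's treatment, which assumes the full equivalence-relation structure throughout. So your argument is the same in outline but tidier in its dependencies: it reuses Propositions~\ref{ug4} and~\ref{ug2} rather than reproving the quotient construction, and it makes explicit the step that justifies working with elements.
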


\begin{proof}
Let $A/E$ be the set of all equivalence classes $[a]_{E}$ with respect to $E$%
, where $[a]_{E}=\{b\in A:(a,b)\in E\}.$ It has the structure of a group with the following multiplication:
\begin{equation*}
\lbrack a]_{E}[b]_{E}=[ab]_{E}
\end{equation*}%
for $[a]_{E},[b]_{E}\in A/E$.

The multiplication is independent of the choice of the
representatives, $a,b\in A.$ The morphism $\alpha:A \rightarrow P $
will induce a morphism $\overline{\alpha}:A/E\rightarrow P,$ given
by $[a]_{E}\mapsto \alpha (a).$ Since $E$ is the subobject of
$A\times _{P}A,$ by the definition of an
equivalence relation on $A$, i.e.%
\begin{equation*}
E\subseteq A\times _{P}A=\{\left( a,a^{\prime }\right) :\alpha (a)=\alpha
(a^{\prime })\}
\end{equation*}%
then, $\left( u(x),v(x)\right) \in E$ for $x\in E,$ since $\alpha
u(x)=\alpha v(x),$ thus $\left( 1,u(x)v(x)^{-1}\right) \in E,$ so
$[u(x)v(x)^{-1}]_E=[1]_E$. Therefore $\alpha u=\alpha v$ and
$\overline{\alpha}$ is well defined. $( A/E,\overline{\alpha})$ is a
crossed $P$-module with the action defined by
$^{p}[a]_{E}=[^{p}a]_{E}$ for $[a]_{E}\in A/E$ and $p\in P$. The
verification of the axiom CM$1$ is immediate, while CM$2$ is proved
as follows:
\begin{equation*}
^{\overline{\alpha}([a]_{E})}[b]_{E}={^{\alpha
(a)}[b]_{E}}=[^{\alpha
(a)}b]_{E}=[aba^{-1}]_{E}=[a]_{E}[b]_{E}[a]_{E}^{-1}
\end{equation*}%
for $[a]_{E}$ and $[b]_{E}\in A/E$ . Thus we will get the
following diagram
\begin{equation*}
\xymatrix { E \ar@<-0.75ex>[r]_-v \ar@<0.75ex>[r]^-u
\ar[dr]_{\varepsilon} & A \ar[r]^-p \ar[d]^-\alpha & A/E
\ar[dl]^-{\overline{\alpha}} & \\ \  & P & & }
\end{equation*}%
of morphisms of crossed $P$-modules, where $p$ is the projection into $A/E.$
Again since $\alpha u=\alpha v,
pu=pv.$ Suppose that there exist an object $\left( D,\delta \right) \in
\mathcal{X}$ with $u^{\prime },v^{\prime }:D\rightarrow A,$ such that $%
pu^{\prime }=pv^{\prime },$ i.e. $pu^{\prime }(d)=pv^{\prime }(d),$ for all $%
d\in D$ so $[u^{\prime }(d)]_{E}=[v^{\prime }(d)]_{E},$ i.e. $\left(
u^{\prime }(d),v^{\prime }(d)\right) \in E,$ and therefore there exists a
unique morphism $\theta :D\rightarrow E,$ such that $u\theta =u^{\prime }$
and $v\theta =v^{\prime }.$
\[
\xymatrix{ (D,\delta) \ar@/^/[drr]^{u^{\prime}} \ar@/_/[ddr]_{v^{\prime}} \ar@{-->}[dr]^\theta & & & \\
\ & (E,\varepsilon) \ar[r]^u \ar[d]_v &  (A,\alpha) \ar[d]^p & \\
\ & (A,\alpha) \ar[r]^p & (A/E,\overline{\alpha}). & }
\]
Thus $\left( u,v\right) $ is kernel pair of a morphism,
$p:A\rightarrow A/E,$ in \textsf{XMod}$/P$.
\end{proof}

Thus we have already proved the following theorem.
\begin{theorem}
The category \textsf{XMod}$/P$ is an exact category.
\end{theorem}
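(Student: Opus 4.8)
The plan is essentially to assemble the pieces already in hand. Recall the two axioms: \textsf{X} is regular if $EX1$ holds (every morphism has a kernel pair and that kernel pair has a coequaliser) and exact if in addition $EX2$ holds (every equivalence relation is effective, i.e.\ is a kernel pair). So the proof of the theorem is just a citation of the three immediately preceding propositions. First I would invoke the proposition stating that in \textsf{XMod}$/P$ every morphism has a kernel pair and that kernel pair admits a coequaliser; this is exactly $EX1$, so \textsf{XMod}$/P$ is regular. Then I would invoke the proposition that every equivalence relation in \textsf{XMod}$/P$ is effective; this is exactly $EX2$. Combining the two gives that \textsf{XMod}$/P$ is exact, which is the assertion of the theorem.

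Concretely, the write-up would read: by Proposition (the kernel-pair/coequaliser one), \textsf{XMod}$/P$ satisfies $EX1$, hence is regular; by Proposition (the effective-equivalence-relation one), it satisfies $EX2$; therefore \textsf{XMod}$/P$ is an exact category, as required. One might also remark for completeness that the kernel pair used in $EX1$ is a special case of the pullback construction from the proposition on pullbacks, and its coequaliser comes from the coequaliser proposition, but strictly this is already contained in the cited results.

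There is no real obstacle here — the phrase ``Thus we have already proved the following theorem'' signals that all the work is done. The only thing to be careful about is matching the definition of exact/regular precisely to the two propositions: one must note that ``kernel pair'' is literally the pullback of a morphism against itself (so the pullback proposition plus the coequaliser proposition really do yield $EX1$), and that ``effective'' in the statement of the last proposition is literally ``is a kernel pair'', which is the content of $EX2$. Once these identifications are made explicit, the theorem follows immediately.

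\begin{proof}
By the proposition asserting that in \textsf{XMod}$/P$ every morphism has a kernel pair and that this kernel pair has a coequaliser, \textsf{XMod}$/P$ satisfies axiom $EX1$; hence \textsf{XMod}$/P$ is a regular category. By the proposition asserting that every equivalence relation in \textsf{XMod}$/P$ is effective, \textsf{XMod}$/P$ satisfies axiom $EX2$. Therefore \textsf{XMod}$/P$ is an exact category.
\end{proof}
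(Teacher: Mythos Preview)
Your proposal is correct and matches the paper's approach exactly: the paper gives no separate proof of the theorem, but simply records it as a consequence of the two propositions immediately preceding it (existence of kernel pairs with coequalisers, and effectiveness of equivalence relations), which is precisely what you do. Your additional remark about the kernel pair being a special case of the pullback construction, and its coequaliser coming from Proposition~\ref{ug4}, is also in line with how the paper proves the first of those two propositions.
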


\begin{proposition}
The category \textsf{XMod}$/P$ has a set of generators.
\end{proposition}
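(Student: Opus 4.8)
The plan is to take the family of singly generated free crossed $P$-modules as the required set of generators. Put
\[
\mathcal{G}=\{\,(C(\overline{x}),\partial_x)\mid x\in P\,\},
\]
one object for each element of the fixed base group $P$; since the underlying set of $P$ is a set, $\mathcal{G}$ is a set, and each member of $\mathcal{G}$ lies in the full subcategory $\Gamma$ of finitely generated free crossed $P$-modules used to define $\mathcal{U}$. Recall that $\mathcal{G}$ is a set of generators of $\mathsf{XMod}/P$ precisely when, for every parallel pair $f,g\colon(A,\alpha)\rightarrow(B,\beta)$ in $\mathsf{XMod}/P$ with $f\neq g$, there is some $x\in P$ and a morphism $h\colon(C(\overline{x}),\partial_x)\rightarrow(A,\alpha)$ with $fh\neq gh$. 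Thus the proof reduces to producing such an $h$.

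Given $f\neq g$, I would first choose $a\in A$ with $f(a)\neq g(a)$ and set $x=\alpha(a)\in P$. The function $\{\overline{x}\}\rightarrow A$ sending $\overline{x}\mapsto a$ then satisfies $\alpha(a)=x=\omega_x(\overline{x})$, so by the universal property of $(C(\overline{x}),\partial_x)$ as the free crossed $P$-module on $\omega_x$ there is a unique morphism of crossed $P$-modules $h\colon(C(\overline{x}),\partial_x)\rightarrow(A,\alpha)$ (the identity on $P$) carrying the basis element $\overline{x}$ to $a$. Composing with $f$ and $g$ gives $(fh)(\overline{x})=f(a)\neq g(a)=(gh)(\overline{x})$, hence $fh\neq gh$, which is exactly what is needed. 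Therefore $\mathcal{G}$ is a set of generators for $\mathsf{XMod}/P$, and a fortiori so is the larger family $\Gamma$ of all finitely generated free crossed $P$-modules appearing in the definition of $\mathcal{U}$.

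There is no substantial obstacle here: the argument is simply the categorical rendering of the principle that two morphisms disagreeing at a point can be separated by probing that point with a free generator. The only steps requiring care are checking the compatibility $\alpha(a)=\omega_x(\overline{x})$ that licenses the use of the universal property — which is automatic once $x$ is taken to be $\alpha(a)$ — and observing that the separating objects genuinely form a set, which holds because they are indexed by the underlying set of the fixed group $P$. Since every element of $C(\overline{x})$ has the form $P(\overline{x})\,{}^{p}\overline{x}$, each such $h$ is moreover completely determined by the single value $h(\overline{x})=a$, which makes the uniqueness clause transparent.
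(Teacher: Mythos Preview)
Your argument is correct and uses the standard ``separating parallel morphisms'' formulation of a generating set: given $f\neq g$, pick $a$ with $f(a)\neq g(a)$, label it by the free object $(C(\overline{x}),\partial_x)$ with $x=\alpha(a)$, and conclude $fh\neq gh$. The paper proves the same proposition via a different (strong-generator) characterisation: it takes a monomorphism $m\colon(A,\alpha)\rightarrow(B,\beta)$ that is not an isomorphism, picks $b\in B\setminus\mathrm{Im}\,m$, and produces $g\colon(C(\overline{y}),\partial_y)\rightarrow(B,\beta)$ with $g(\overline{y})=b$ that cannot factor through $m$. The underlying mechanism is identical---singly generated free crossed $P$-modules probe individual elements---but your version is the one actually needed later in the paper for the faithfulness of $\mathcal{U}$ (see the paper's own appeal to Schubert's definition there), whereas the paper's version directly exhibits the ``non-factoring through a proper subobject'' behaviour. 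Your approach is arguably more economical and lines up better with the downstream application; the paper's approach emphasises the subobject-detecting aspect. Either way, the generating set is the same family $\{(C(\overline{x}),\partial_x)\mid x\in P\}\subseteq\Gamma$.
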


\begin{proof}
Let $\Gamma $ be the full subcategory of \textsf{XMod}$/P$
determined by all
finitely generated free crossed $P$-modules, i.e.%
\[
\Gamma =\left\{ \left( C(X),\partial _{X}\right) :C(X)\text{ free on }%
w_{X}:X\rightarrow P\text{ and }X\text{ finite set}\right\} .
\]%
Let $\ m:\left( A,\alpha \right) \rightarrow \left( B,\beta \right)$ be
any monomorphism of crossed $P$-modules which is not an isomorphism, then, $%
\beta m=\alpha $ and so $Im\alpha \subseteq Im\beta .$ Since $m$ is a monomorphism and not an
isomorphism, we can choose $b\in B$
such that $b\notin Im$ $m,$
and let $y=\beta (b).$ Form the singly generated free crossed $P$-module $%
\left( C(\overline{y}),\partial _{y}\right) $ on $\left( \left\{
\overline{y}\right\}
,w_{y}\right) ,$ where $w_{y}\left( \overline{y}\right) =y,$ and $\partial _{y}(%
\overline{y})=w_{y}\left( \overline{y}\right) =y$ in $P.$ Define the morphism $g:C(%
\overline{y})\rightarrow B$ on the generator, by
$g(\overline{y})=b.$

Suppose $g$ factors through a morphism $h:C(\overline{y})\rightarrow
A,$ such that $\ mh=g.$ Thus $b\in Im\ m.$ Hence the morphism $g$
does not factor through $m.$
\end{proof}

\begin{proposition}
For $\left( C(\overline{x}),\partial _{x}\right) $ and $\left( C(\overline{y}%
),\partial _{y}\right) $ in $\Gamma ,$ $f:(C(\overline{x}),\partial_{x}) \rightarrow (C(%
\overline{y}),\partial_{y}) $ exists if and only if $x=pyp^{-1},$
for some $p\in P.$
\end{proposition}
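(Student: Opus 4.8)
The plan is to characterise morphisms between singly generated free crossed $P$-modules using the explicit description of $C(\overline{x})$ recalled above, namely that every element has the form $P(\overline{x})\,{^p\overline{x}}$ for some $p\in P$, and that $\partial_x(P(\overline{x})\,{^p\overline{x}}) = p x p^{-1}$. The key observation is that the image of $\partial_x$ is exactly the normal closure $\langle\!\langle x\rangle\!\rangle$ of $x$ in $P$, i.e. the set of all $P$-conjugates products of $x$; but more usefully, the \emph{generator} $\overline{x}$ maps under $\partial_x$ to $x$ itself, and a morphism $f:(C(\overline{x}),\partial_x)\to(C(\overline{y}),\partial_y)$ in $\mathsf{XMod}/P$ must satisfy $\partial_y f = \partial_x$.

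First I would prove the ``if'' direction. Suppose $x = pyp^{-1}$ for some $p\in P$. I want to produce a morphism $f$. By the universal property of the free crossed $P$-module $(C(\overline{x}),\partial_x)$ on $\omega_x\colon\overline{x}\mapsto x$, to give a morphism into the crossed $P$-module $(C(\overline{y}),\partial_y)$ it suffices to give an element $a\in C(\overline{y})$ with $\partial_y(a) = x$. Take $a = {^p\overline{y}}$ (that is, $P(\overline{y})\,{^p\overline{y}}$); then $\partial_y(a) = p\,\partial_y(\overline{y})\,p^{-1} = pyp^{-1} = x$ by CM1, so the universal property yields a unique $f$ with $f(\overline{x}) = {^p\overline{y}}$, and $f$ is a morphism of crossed $P$-modules as required.

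Next the ``only if'' direction. Suppose $f:(C(\overline{x}),\partial_x)\to(C(\overline{y}),\partial_y)$ exists. Then $f(\overline{x})\in C(\overline{y})$, so by the normal form $f(\overline{x}) = P(\overline{y})\,{^p\overline{y}}$ for some $p\in P$. Since $f$ is a morphism in $\mathsf{XMod}/P$ we have $\partial_y f = \partial_x$, hence $\partial_y(f(\overline{x})) = \partial_x(\overline{x}) = x$. On the other hand $\partial_y(P(\overline{y})\,{^p\overline{y}}) = p\,\partial_y(\overline{y})\,p^{-1} = pyp^{-1}$. Therefore $x = pyp^{-1}$, which is exactly the claimed condition.

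I expect the main point requiring care — rather than a genuine obstacle — to be pinning down the normal form $c = P(\overline{x})\,{^p\overline{x}}$ and the formula $\partial_x(c) = pxp^{-1}$ cleanly enough to quote them; these follow from the construction of $C(\overline{x})$ as the free group on $\{\overline{x}\}\times P$ modulo Peiffer elements, together with CM1, but one should state explicitly why the coset representative can always be taken of the form ${^p\overline{x}}$ (this is precisely the content of the remark in the subsection on singly generated free crossed modules). Once that normal form and the behaviour of $\partial_x$ on it are in hand, both directions are short, with the ``if'' direction invoking the universal property and the ``only if'' direction being a direct computation.
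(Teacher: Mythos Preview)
Your proposal is correct and follows essentially the same line as the paper. The one noteworthy difference is in the ``if'' direction: the paper defines $f$ explicitly on elements by $f({^{t}\overline{x}})={^{tp}\overline{y}}$ and then verifies by hand that $\partial_{y}f=\partial_{x}$ and that $f$ is $P$-equivariant, whereas you invoke the universal property of the free crossed $P$-module to produce $f$ from the single assignment $\overline{x}\mapsto {^{p}\overline{y}}$ after checking $\partial_{y}({^{p}\overline{y}})=x$. Your route is slightly cleaner and avoids the elementwise checks; the paper's route is more concrete but amounts to the same computation. For the ``only if'' direction the two arguments coincide: the paper writes it as $x\in \mathrm{Im}\,\partial_{y}$ and then concludes $x=pyp^{-1}$, which---as you make explicit---rests on the same normal form $c=P(\overline{y})\,{^{p}\overline{y}}$ for elements of $C(\overline{y})$.
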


\begin{proof}
If $f$ exists, then $\partial _{y}f=\partial _{x}$ and thus
$Im\partial _{y}\supseteq Im\partial _{x},$ i.e. $x\in \left(
y\right) $, so $x=pyp^{-1}$ for some $p\in P.$

On the other hand, suppose that $x=pyp^{-1},$ for some $p\in P,$ define $f:C(%
\overline{x})\rightarrow C(\overline{y})$ by $f({^{t}\overline{x}})=f({^{t}%
\overline{pyp^{-1}}})={^{tp}\overline{y}}$ for all $t,p\in P.$ To
check that $f$ is a crossed module morphism, let
${^{t}\overline{x}},{^{t^{\prime}}\overline{x}} \in C(\overline{x})$
then
\[
\begin{array}{rcl}
\partial _{y}f\left( {^{t}\overline{x}}\right)  & = & \partial _{y}f(^{t}%
\overline{pyp^{-1})} \\
& = & \partial _{y}\left( ^{t}{}^{p}\overline{y}\right)  \\
& = & t\partial _{y}\left( {}^{p}\overline{y}\right) t^{-1} \\
& = & tp\partial _{y}\left( \bar{y}\right) p^{-1}t^{-1} \\
& = & tpyp^{-1}t^{-1} \\
& = & txt^{-1} \\
& = & t\partial _{x}(\bar{x})t^{-1} \\
& = & \partial _{x}\left( {^{t}\overline{x}}\right)
\end{array}%
\]
also for all $q\in P,$%
\[
\begin{array}{rcl}
{^{q}f({^{t}\overline{x})}} & = & {^{q}f(^{t}\overline{pyp^{-1}})} \\
& = & {^{q}(^{tp}\overline{y})} \\
& = & ({^{qtp}\overline{y}}) \\
& = & f({^{qt}{\overline{pyp^{-1}}}}) \\
& = & f({^{q}({^{t}\overline{pyp^{-1}}})}) \\
& = & f({^{q}({^{t}\overline{x}})}).%
\end{array}%
\]%
Thus $f$ is a crossed module morphism.
\end{proof}

\section{The Embedding Theorem}

We give the proof of the full embedding theory of the category $\mathsf{XMod/P}$ of crossed modules into the functor category $\left( \mathsf{\Gamma ^{op},Set}\right),$
where $\textsf{Set}$ is the category of sets and $\Gamma$ is the full subcategory of $\mathsf{XMod/P}$ determined by all finitely generated free crossed $P$-modules.

We will start giving the notion of \textquotedblleft labelling\textquotedblright, we need the proof of the Embedding Theorem.

Let $\ \Gamma $ be as above. Note that for each $a\in A$ there is a free
crossed $P$-module, $(C(\overline{x}),\partial _{x})$ and a unique morphism,
$<a>:(C(\overline{x}),\partial _{x})\rightarrow \left( A,\alpha \right) ,$
where $x=\alpha \left( a\right) ,$ which takes $\overline{x}\mapsto a\in A$
, we will call this morphism, the labelling of $a,$ so $\mathsf{XMod/P}%
\left( (C(\overline{x}),\partial _{x}),\left( A,\alpha \right) \right) \cong
\alpha ^{-1}\left( x\right) .$ Thus for $a,a^{\prime }\in A,$ we will have
labellings, \ $<aa^{\prime }>,$ for $\alpha \left( a\right) =x$ and $\alpha
\left( a^{\prime }\right) =y$ in $P.$

We will write $\mathcal{X}=\mathsf{XMod}/P$ in the rest of this article.
\newpage
\begin{theorem}
(The Embedding Theorem) \label{ug1}
\end{theorem}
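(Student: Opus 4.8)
The functor $\mathcal{U}$ is the composite of the Yoneda embedding $\mathcal{X}\to(\mathcal{X}^{op},\mathsf{Set})$ with the restriction functor $(\mathcal{X}^{op},\mathsf{Set})\to(\Gamma^{op},\mathsf{Set})$, so it is a functor without further comment, and the plan is to show separately that it is faithful, full and exact. The single tool used throughout is the labelling: for $a\in A$ with $x=\alpha(a)$ the morphism $<a>:(C(\overline{x}),\partial_x)\to(A,\alpha)$ sends $\overline{x}\mapsto a$, the assignment $<a>\mapsto a$ is a bijection $\mathcal{X}((C(\overline{x}),\partial_x),(A,\alpha))\cong\alpha^{-1}(x)$, and $\mathcal{U}(\mu)$ carries $<a>$ to $<\mu(a)>$. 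Faithfulness is then immediate: if $\mathcal{U}(\mu)=\mathcal{U}(\mu')$ then $<\mu(a)>=<\mu'(a)>$ for every $a\in A$, and evaluating on the generator $\overline{x}$ gives $\mu(a)=\mu'(a)$.

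For fullness, given $\tau\colon\mathcal{U}(A,\alpha)\Rightarrow\mathcal{U}(B,\beta)$ I would define $\mu\colon A\to B$ by $\mu(a)=\bigl(\tau_{C(\overline{x})}(<a>)\bigr)(\overline{x})$ for $a\in A$, $x=\alpha(a)$; because $\tau_{C(\overline{x})}(<a>)$ is a morphism $C(\overline{x})\to(B,\beta)$ one gets $\beta\mu=\alpha$ and, later, the needed compatibility of individual values with the $P$-action. The crux is to prove that $\mu$ is a morphism of crossed $P$-modules and that $\mathcal{U}(\mu)=\tau$. Multiplicativity is obtained by feeding into naturality the two-generator free crossed $P$-module $F=C(\overline{x},\overline{y})$ together with the morphism $w\colon F\to(A,\alpha)$ sending $\overline{x}\mapsto a$, $\overline{y}\mapsto a'$: naturality with respect to the morphisms $C(\overline{x})\to F$, $C(\overline{y})\to F$, $C(\overline{xy})\to F$ sending generators to $\overline{x}$, $\overline{y}$, $\overline{x}\,\overline{y}$ (legitimate since the boundaries in $P$ are $x$, $y$, $xy$) identifies $\mu(a)$, $\mu(a')$, $\mu(aa')$ with $\tau_F(w)(\overline{x})$, $\tau_F(w)(\overline{y})$, $\tau_F(w)(\overline{x}\,\overline{y})$, and the last is the product of the first two since $\tau_F(w)$ is a group homomorphism. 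Equivariance is obtained the same way from the morphism $C(\overline{pxp^{-1}})\to C(\overline{x})$ sending the generator to $^{p}\overline{x}$, using that $\tau_{C(\overline{x})}(<a>)$ respects the $P$-action. Finally, for $g\colon(C(X),\partial_X)\to(A,\alpha)$ in $\Gamma$ and each basis element $\nu(\rho)$, naturality against $C(\overline{x_\rho})\to C(X)$ (generator $\mapsto\nu(\rho)$, $x_\rho=\partial_X\nu(\rho)$) gives $\mu(g(\nu(\rho)))=\bigl(\tau_{C(X)}(g)\bigr)(\nu(\rho))$; since a morphism out of a free crossed $P$-module is determined by its values on the basis, this yields $\mu g=\tau_{C(X)}(g)$, i.e.\ $\mathcal{U}(\mu)=\tau$.

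For exactness, $\mathcal{X}$ has all finite limits (pullbacks by Proposition~\ref{ug2}, equalisers by Proposition~\ref{ug3}, and the terminal object $(P,\mathrm{id}_P)$); limits in $(\Gamma^{op},\mathsf{Set})$ are computed pointwise and each $\mathcal{X}(C,-)$ preserves limits, so $\mathcal{U}$ preserves finite limits. In $\mathcal{X}$ the regular epimorphisms are exactly the surjective morphisms: a coequaliser as in Proposition~\ref{ug4} is surjective, and a surjection $\mu\colon(A,\alpha)\to(B,\beta)$ is the coequaliser of its own kernel pair because the normal closure of $\{a(a')^{-1}:\mu(a)=\mu(a')\}$ equals $\ker\mu$. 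Each $(C(X),\partial_X)\in\Gamma$ is projective for surjections: given a surjection $\mu\colon(A,\alpha)\to(B,\beta)$ and $g\colon(C(X),\partial_X)\to(B,\beta)$, pick for each basis element $\nu(\rho)$ a preimage $a_\rho$ of $g(\nu(\rho))$; then $\alpha(a_\rho)=\beta g(\nu(\rho))=\partial_X\nu(\rho)=\omega_X(\rho)$, so $\nu(\rho)\mapsto a_\rho$ extends to a unique $h$ with $\mu h=g$. Hence $\mathcal{X}(C,-)$ preserves surjections, so $\mathcal{U}$ preserves regular epimorphisms; being full, faithful and preserving finite limits and regular epimorphisms, $\mathcal{U}$ is a full exact embedding.

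The main obstacle is the fullness step: showing that the elementwise rule $\mu$ built from the labellings is genuinely a homomorphism of crossed $P$-modules and realises $\tau$. This forces the right choice of finitely generated free crossed $P$-modules as test objects --- above all the two-generator ones, which are precisely what make the $P$-action able to witness group multiplication --- and a careful run through several naturality squares; once the diagrams are correctly set up, the remaining verifications are routine computations.
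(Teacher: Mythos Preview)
Your argument is correct, and for faithfulness and fullness it follows essentially the paper's route: the same definition $\mu(a)=\bigl(\tau_{C(\overline{x})}(<a>)\bigr)(\overline{x})$, the same use of the two-generator free object $C(\overline{x},\overline{y})$ and the map $C(\overline{xy})\to C(\overline{x},\overline{y})$ to get multiplicativity, and the same morphism $C(\overline{pxp^{-1}})\to C(\overline{x})$, $\overline{pxp^{-1}}\mapsto{}^{p}\overline{x}$, to get $P$-equivariance. You add one thing the paper only asserts: an explicit check that $\mathcal{U}(\mu)=\tau$ on an arbitrary $C(X)$, reducing to the singly generated case via naturality against the inclusions of basis elements. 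That is a genuine improvement in completeness.

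Where you diverge is in the exactness proof. The paper verifies by hand, for each $(C(X),\partial_X)$, that $\mathcal{U}$ sends the explicit coequaliser of Proposition~\ref{ug4} to a coequaliser in $\mathsf{Set}$, and likewise that it preserves finite products and the equaliser of Proposition~\ref{ug3}. You instead invoke two clean categorical facts: representable functors $\mathcal{X}(C,-)$ preserve all limits, handling the finite-limit part at once; and free crossed $P$-modules are projective with respect to surjections, together with the identification of regular epimorphisms in $\mathcal{X}$ with surjections, handling preservation of regular epimorphisms. Your route is shorter and more conceptual, and it makes transparent \emph{why} the generating subcategory $\Gamma$ suffices (its objects are projective generators); the paper's route is more self-contained but repeats, at the level of hom-sets, computations already done when constructing (co)equalisers in $\mathcal{X}$. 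Both are valid, and nothing in your proposal is gapped.
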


The functor
\begin{equation*}
\mathcal{U}:\mathsf{X}\rightarrow \mathsf{( \Gamma ^{op},Set)}
\end{equation*}%
given by $\mathcal{U}\left( \left( A,\alpha \right) \right) \left(
C(X)\right) =\mathcal{X}\left( \left( C(X),\partial _{X}\right) ,\left(
A,\alpha \right) \right) $ is a full exact embedding functor.

\begin{proposition}
$\mathcal{U}$ is full.
\end{proposition}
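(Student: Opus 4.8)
The plan is to exhibit a map in both directions between $\mathsf{X}\bigl((A,\alpha),(A',\alpha')\bigr)$ and $\mathrm{Nat}\bigl(\mathcal{U}(A,\alpha),\mathcal{U}(A',\alpha')\bigr)$ and show they are mutually inverse. One direction is the action of the functor $\mathcal{U}$ itself: a morphism $(\mu,id_P)$ induces, by postcomposition, a natural transformation $\mathcal{U}(\mu)$ whose component at $(C(X),\partial_X)$ sends $g\mapsto \mu g$. So the real content is the inverse map: given a natural transformation $\eta:\mathcal{U}(A,\alpha)\Rightarrow\mathcal{U}(A',\alpha')$, I must produce a crossed $P$-module morphism $\mu:A\to A'$ and check that $\mathcal{U}(\mu)=\eta$ and that every morphism arises this way.

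The key device is the labelling introduced just before the theorem. For $a\in A$ with $x=\alpha(a)$, the labelling $\langle a\rangle:(C(\overline{x}),\partial_x)\to(A,\alpha)$ is an element of $\mathcal{U}(A,\alpha)(C(\overline{x}))$, and as noted $\mathcal{U}(A,\alpha)(C(\overline{x}))\cong\alpha^{-1}(x)$ via $\langle a\rangle\leftrightarrow a$. So I define $\mu:A\to A'$ by declaring $\mu(a)$ to be the element of ${\alpha'}^{-1}(x)$ corresponding to $\eta_{(C(\overline{x}),\partial_x)}(\langle a\rangle)$; equivalently $\langle\mu(a)\rangle=\eta_{(C(\overline{x}),\partial_x)}(\langle a\rangle)$, where implicitly $\alpha'(\mu(a))=\alpha(a)=x$ because $\eta$ lands in $\mathcal{U}(A',\alpha')(C(\overline{x}))\cong{\alpha'}^{-1}(x)$. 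This already gives $\alpha'\mu=\alpha$, i.e.\ condition (i) for a morphism in $\mathsf{X}$.

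Next I verify $\mu$ is a group homomorphism and is $P$-equivariant, and here naturality does all the work. For equivariance: the morphism $C(\overline{px p^{-1}})\to C(\overline{x})$ (which exists by the proposition classifying maps between singly generated free crossed modules, sending the generator appropriately) induces a commuting naturality square; chasing $\langle a\rangle$ around it, using that composition with that morphism realises the operation ${}^p(-)$ on labellings, yields $\langle{}^p\mu(a)\rangle=\eta(\langle{}^pa\rangle)$, hence $\mu({}^pa)={}^p\mu(a)$. For multiplicativity: given $a,a'\in A$ I use the labelling $\langle aa'\rangle:(C(\overline{z}),\partial_z)\to(A,\alpha)$ with $z=\alpha(aa')=\alpha(a)\alpha(a')$, together with the appropriate comparison morphisms in $\Gamma$ relating $C(\overline{z})$, $C(\overline{x})$, $C(\overline{y})$ (encoding the product operation on generators), and apply naturality of $\eta$ to transport the relation $\langle aa'\rangle$ "decomposes into" $\langle a\rangle,\langle a'\rangle$ over to $A'$, concluding $\mu(aa')=\mu(a)\mu(a')$. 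Thus $(\mu,id_P)$ is a morphism of crossed $P$-modules.

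Finally I check the two constructions are mutually inverse. That $\mathcal{U}(\mu)=\eta$ reduces, by the density/Yoneda-type observation that every element $g\in\mathcal{U}(A,\alpha)(C(X))$ is determined by its effect on generators, i.e.\ by the labellings $\langle g(\overline{x}_i)\rangle$ obtained by restricting along $C(\overline{x}_i)\to C(X)$, to checking the components agree on labellings — which holds by the very definition of $\mu$, once one knows $\eta$ is determined by its values on the $C(\overline{x})$'s (again because a finitely generated free crossed module is "built from" its one-generator subobjects, and $\eta$ is natural). Conversely, starting from $\mu$ and forming $\eta=\mathcal{U}(\mu)$, applying the recipe recovers $\mu$ since $\mathcal{U}(\mu)_{(C(\overline{x}),\partial_x)}(\langle a\rangle)=\mu\circ\langle a\rangle=\langle\mu(a)\rangle$.

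\textbf{Main obstacle.} The delicate point is the claim that a natural transformation out of $\mathcal{U}(A,\alpha)$ is already determined by, and freely specifiable on, the values at the \emph{singly} generated free crossed modules $(C(\overline{x}),\partial_x)$ — and correspondingly that the group operations and $P$-action on $A$ are all visible as morphisms in $\Gamma$ between such objects. Making this precise requires knowing exactly which morphisms exist in $\Gamma$ (the preceding proposition only classifies maps $C(\overline{x})\to C(\overline{y})$ between singly generated ones) and how a general finitely generated $C(X)$ maps to and receives maps from the one-generator pieces, so that the coproduct-like structure can be exploited. I expect the multiplicativity of $\mu$ to be where the argument must be most careful, since it is there that one genuinely needs a morphism in $\Gamma$ witnessing "multiply two generators", and one must confirm such a morphism (out of a suitable two-generator free crossed module) exists and interacts correctly with the labellings.
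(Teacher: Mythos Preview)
Your proposal is correct and follows essentially the same route as the paper: define the candidate morphism via labellings at singly generated free crossed modules, then verify multiplicativity and $P$-equivariance by naturality with respect to suitable morphisms in $\Gamma$. The step you flag as the main obstacle is resolved in the paper exactly as you anticipate, via the two-generator object $C(\overline{x},\overline{y})$, the morphism $\sigma:C(\overline{xy})\to C(\overline{x},\overline{y})$ sending $\overline{xy}\mapsto\overline{x}\,\overline{y}$, and the product decomposition $\mathcal{X}(C(\overline{x},\overline{y}),A)\cong\mathcal{X}(C(\overline{x}),A)\times\mathcal{X}(C(\overline{y}),A)$, which forces $\phi_{(\overline{x},\overline{y})}$ to be the product of the one-generator components.
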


\begin{proof}
Let $\left( A,\alpha \right) $ and $\left( B,\beta \right) $ be two crossed $%
P$-modules in $\mathcal{X}$. Write $\mathcal{U}A$ for
$\mathcal{U}\left(
A,\alpha \right) :\Gamma ^{op}\rightarrow Set.$ Suppose that $\phi :\mathcal{%
U}A\mathcal{\rightarrow U}B$ is a natural transformation. Write
\[
\phi _{\overline{x}}=\phi (C(\overline{x})):\mathcal{U}A(C(\overline{x}))%
\mathcal{\rightarrow U}B(C(\overline{x})).
\]%
Given $a\in A$ we get a unique morphism of crossed $P$-modules
$<a>\in
\mathcal{U}A(C(\overline{x})),$ where $\alpha (a)=x\in P,$ so define a morphism $%
f:\left( A,\alpha \right) $ $\rightarrow $ $\left( B,\beta \right) $ of $\mathcal{X}$ by $%
f(a)=\left( \phi _{\overline{x}}<a>\right) \left(
\overline{x}\right)$ such that $\mathcal{U}f=\phi.$ We need to prove
that $f(aa^{\prime })=f\left( a\right) f\left( a^{\prime }\right) ,$
and $f(^{p}a)={^{p}f(a)}$ for all $a,a^{\prime }\in A$ and $p
\in P$ to check that the morphism $f$ is a morphism of crossed $P$-modules in $%
\mathcal{X}$.

Let $<a>:C(\overline{x})\rightarrow A,<a^{\prime }>:C(\overline{y}%
)\rightarrow A$ and $<aa^{\prime }>:C(\overline{xy})\rightarrow A,$
be the morphisms which are the \textquotedblleft
labels\textquotedblright \ of $a,a^{\prime }, (aa^{\prime }) \in A$
respectively. Thus we can write
\[
f(aa^{\prime })=\phi _{\left( \overline{xy}\right) }\left(
<aa^{\prime }>\right) \left( \overline{xy}\right) .
\]%
Also we have a morphism $\sigma :C(\overline{xy})\rightarrow C(\overline{x},%
\overline{y}),$ given by
$\sigma \left( \overline{xy}\right) =\overline{x} \ \overline{y}$
and hence we get the commutative triangle
\[
\xymatrix { C(\overline{xy}) \ar[r]^\sigma \ar[dr]_{<aa^{\prime }>}
& C(\overline{x},\overline{y}) \ar[d]^{<a><a^{\prime }>} & \\ \ & A
& }
\]%
of morphisms of crossed $P$-modules, where $<a><a^{\prime }>$ is the
unique
morphism defined by:%
\[
\left( <a><a^{\prime }>\right) \left( \overline{x}\right) =a\qquad \text{and}%
\qquad \left( <a><a^{\prime }>\right) \left( \overline{y}\right)
=a^{\prime }.
\]%
Apply $\phi :\mathcal{U}A\mathcal{\rightarrow U}B$ to $\sigma $ we
will get the commutative square
\[
\xymatrix@R=30pt@C=40pt{ \mathcal{U}A\left(
C({\overline{x}},{\overline{y}})\right) \ar[d]_{\mathcal{U}A\left(
\sigma \right)} \ar[r]^-{ \phi( {\overline{x}},{\overline{y}})} &
\mathcal{U}B\left( C({\overline{x}},{\overline{y}})\right)
\ar[d]^{\mathcal{U}B\left( \sigma \right)} & \\ \ \mathcal{U}A\left(
C(\overline{xy})\right) \ar[r]_-{\phi(\overline{xy})} &
\mathcal{U}B\left( C(\overline{xy})\right) & }
\]%
where
\[
\mathcal{U}A\left( \sigma \right) \left( <a><a^{\prime }>\right)
=\left( <a><a^{\prime }>\right) \circ \sigma =<aa^{\prime }>
\]%
and
\[
\left( <a><a^{\prime }>\right)
:C(\overline{x},\overline{y})\rightarrow A,
\]%
given by $\left( \overline{x}\ \overline{y}\right) \mapsto <a>\left( \overline{%
x}\right) <a^{\prime }>\left( \overline{y}\right) .$ Thus
\[
\begin{array}{rcl}
f(aa^{\prime }) & = & \phi _{\left( \overline{xy}\right)
}(<aa^{\prime
}>)\left( \overline{xy}\right)  \\
& = & \phi _{\left( \overline{xy}\right) }\left( (<a><a^{\prime
}>)\circ
\sigma \right) \left( \overline{xy}\right)  \\
& = & \phi _{\left( \overline{xy}\right) }\mathcal{U}A\left( \sigma
\right)
\left( (<a><a^{\prime }>)\right) \left( \overline{x}\allowbreak \;\overline{y%
}\right)  \\
& = & \mathcal{U}B\left( \sigma \right) \circ \phi _{(\overline{x},\overline{%
y})}(<a><a^{\prime }>)\left( \overline{x}\allowbreak
\;\overline{y}\right).
\end{array}%
\]%
But since $\mathcal{X}\left( C(\overline{x},\overline{y}),A\right)
\cong
\mathcal{X}\left( C(\overline{x}),A\right) \times \mathcal{X}\left( C(%
\overline{y}),A\right) ,$ thus
\[
\phi _{(\overline{x},\overline{y})}:\mathcal{X}\left( C(\overline{x},%
\overline{y}),A\right) \rightarrow \mathcal{X}\left( C(\overline{x},%
\overline{y}),B\right)
\]%
is the same up to unique natural isomorphism as%
\[
\mathcal{X}\left( C(\overline{x}),A\right) \times \mathcal{X}\left( C(%
\overline{y}),A\right) \rightarrow \mathcal{X}\left( C(\overline{x}%
),B\right) \times \mathcal{X}\left( C(\overline{y}),B\right) ,
\]%
and $\phi _{(\overline{x},\overline{y})}\left( <a>,<a^{\prime
}>\right) =\left( \phi _{\overline{x}}\left( <a>\right) ,\phi
_{\overline{y}}\left( <a^{\prime }>\right) \right) .$ \\ Now using
the diagram
\[
\xymatrix@R=10pt@C=15pt{ \mathcal{X}\left( C\left( \overline{x}\right) ,A\right)
\times \mathcal{X}\left( C\left( \overline{y}\right) ,A\right)
\ar[rr] \ar[dd] & & \mathcal{X}\left( C\left( \overline{x}\right)
,B\right) \times \mathcal{X}\left( C\left( \overline{y}\right)
,B\right) \ar[dd] & \\ \ & & &
\\ \ \mathcal{U}A({C\left( \overline{x},\overline{y}\right)})
\ar[rr]^{\phi_{\left( \overline{x},\overline{y}\right) }}
\ar[dd]_{\mathcal{U}A(\sigma)} & & \mathcal{U}B({C\left(
\overline{x},\overline{y}\right)}) \ar[dd]^{\mathcal{U}B(\sigma)} &
\\ \ & & & \\ \ \mathcal{U}A({C\left( \overline{xy}\right)})
\ar[rr]_{\phi_{\left( \overline{xy}\right) }} & &
\mathcal{U}B({C\left( \overline{xy}\right)}) & }
\]%
\begin{equation*}
\begin{array}{rcl}
\mathcal{U}B\left( \sigma \right) \circ \phi _{\left( \overline{x},\overline{%
y}\right) }\left( <a>,<a^{\prime }>\right) \left( \overline{x},\overline{y}%
\right)  & = & \mathcal{U}B\left( \sigma \right) \left( \phi _{\overline{x}%
}\left( <a>\right) ,\phi _{\overline{y}}\left( <a^{\prime }>\right) \right)
\left( \overline{x},\overline{y}\right)  \\
& = & \left( \left( \phi _{\overline{x}}\left( <a>\right) ,\phi _{\overline{y%
}}\left( <a^{\prime }>\right) \right) \circ \sigma \right) \left( \overline{%
xy}\right)  \\
& = & \left( \phi _{\overline{x}}\left( <a>\right) ,\phi _{\overline{y}%
}\left( <a^{\prime }>\right) \right) (\overline{x}\text{ }\overline{y}) \\
& = & \left( \phi _{\overline{x}}\left( <a>\right) \phi _{\overline{y}%
}\left( <a^{\prime }>\right) \right) (\overline{x}\text{ }\overline{y}) \\
& = & \phi _{\overline{x}}\left( <a>\right) \left( \overline{x}\right) \phi
_{\overline{y}}\left( <a^{\prime }>\right) (\text{ }\overline{y}) \\
& = & f(a)f(a^{\prime }).%
\end{array}%
\end{equation*}

Thus $f(aa^{\prime })=f(a)f(a^{\prime }).$

We will next prove that $f(^{p}a)={^{p}f(a)}.$ As before suppose $%
\alpha \left( a\right) =x$ and let $y=pxp^{-1}$ and
$<{^{p}a}>:C\left( \overline{y}\right)
\rightarrow A,$ which labels $^{p}a\in A.$ Let $m_{p}^{x}:C\left( \overline{y%
}\right) \rightarrow C\left( \overline{x}\right) $ be the morphism $%
m_{p}^{x}\left( \overline{pxp^{-1}}\right) =$ $^{\text{ }p}%
\overline{x}$, thus $%
<{^{p}a}>= \ <a>\circ m_{p}^{x}$ and \ $\mathcal{U}A\left( m_{p}^{x}\right) :%
\mathcal{U}A\left( C\left( \overline{x}\right) \right) \rightarrow \mathcal{U%
}A\left( C\left( \overline{y}\right) \right) $ will send $<a>$ into $<$ $%
^{p}a>,$ i.e. $\mathcal{U}A\left( m_{p}^{x}\right) \left( <a>\right) =<{^{p}a}>.$ Now apply $\phi :\mathcal{U}A\rightarrow \mathcal{U}B$ to $%
m_{p}^{x},$ this gives a commutative square
\[
\xymatrix@R=20pt@C=30pt{ \mathcal{U}A\left( C\left( {\overline{x}}\right) \right)
\ar[d]_-{\mathcal{U}A\left( m_{p}^{x}\right)} \ar[rr]^-{\phi
_{{\overline{x}}}} & & \mathcal{U}B\left( C\left(
{\overline{x}}\right) \right) \ar[d]^-{\mathcal{U}B\left(
m_{p}^{x}\right)} & \\ \ \mathcal{U}A\left( C\left(
{\overline{y}}\right) \right) \ar[rr]_-{\phi _{{\overline{y}}}} & &
\mathcal{U}B\left( C\left( {\overline{y}}\right) \right) & }
\]%
$f(^{p}a)=\phi _{\overline{y}}\left( <^{p}a>\right) \left( \overline{y}%
\right) ={\phi _{\overline{y}}}\circ \ \mathcal{U}A\left( m_{p}^{x}\right)
\left( <a>\right) \left( \overline{x}\right) =\mathcal{U}B\left(
m_{p}^{x}\right) \circ \ {\phi _{\overline{x}}}\left( <a>\right) \left(
\overline{x}\right) .$ \newline

Now given $<b>=\phi \overline{_{x}}\left( <a>\right) :C\left( \overline{x}\right) \rightarrow B,$
\[
\mathcal{U}B\left( m_{p}^{x}\right) \left( <b>\right) =<b>\circ \ m_{p}^{x}%
\text{ \ \ and \ \ }<b>\circ \ m_{p}^{x}\left( \overline{y}\right)
=<b>\left( ^{p}\overline{x}\right) ={^{p}<b>\left(
\overline{x}\right)} ,
\]
so
\[
\mathcal{U}B\left( m_{p}^{x}\right) \circ \phi
_{\overline{x}}\left( <a>\right) ={ ^{p}f\left( a\right)}.
\]
Also,
\[
\beta f(a)=\beta (\phi _{\overline{x}}<a>)\left( \overline{x}\right)
=\partial _{x}(\overline{x})=x=\alpha \left( a\right).
\]

Hence $f$ is a morphism of crossed $P$-modules in $\mathcal{X}$ and $ \mathcal{U}f=\phi $. Therefore $\mathcal{U}$ is full.
\end{proof}

\begin{proposition}
$\mathcal{U}$ is faithful.
\end{proposition}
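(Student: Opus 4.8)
The plan is to exploit the labellings introduced just before the statement of the Embedding Theorem. Let $f,g:\left( A,\alpha \right)\rightarrow \left( B,\beta \right)$ be two morphisms in $\mathcal{X}$ with $\mathcal{U}f=\mathcal{U}g$. Since a morphism in $\mathsf{XMod}/P$ has the identity on $P$ as its base component, it is completely determined by its action on the top group; so it suffices to prove that $f(a)=g(a)$ for every $a\in A$.

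First I would recall that $\mathcal{U}$ acts on a morphism by post-composition: for each finitely generated free crossed $P$-module $\left( C(X),\partial _{X}\right)$ in $\Gamma$, the component
$\left( \mathcal{U}f\right) _{C(X)}:\mathcal{X}\left( \left( C(X),\partial _{X}\right) ,\left( A,\alpha \right) \right)\rightarrow \mathcal{X}\left( \left( C(X),\partial _{X}\right) ,\left( B,\beta \right) \right)$
sends $h\mapsto f\circ h$, and similarly for $g$. Now fix $a\in A$ and put $x=\alpha \left( a\right) \in P$. The labelling $<a>:\left( C(\overline{x}),\partial _{x}\right)\rightarrow \left( A,\alpha \right)$, i.e. the unique morphism of crossed $P$-modules taking $\overline{x}\mapsto a$, is an element of $\mathcal{U}A\left( C(\overline{x})\right)$. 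The hypothesis $\mathcal{U}f=\mathcal{U}g$, read off at the component $C(\overline{x})$, gives $f\circ <a>=g\circ <a>$ as morphisms $\left( C(\overline{x}),\partial _{x}\right)\rightarrow \left( B,\beta \right)$.

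Evaluating both sides at the generator $\overline{x}$ yields
$f(a)=f\left( <a>\left( \overline{x}\right) \right)=g\left( <a>\left( \overline{x}\right) \right)=g(a)$.
Since $a\in A$ was arbitrary, $f=g$, so $\mathcal{U}$ is faithful. There is no genuine obstacle here; the only point deserving care is the twofold observation that (i) every element of $A$ carries a labelling, so the labellings collectively "see" all of $f$, and (ii) $\mathcal{U}$ on arrows is literally composition, so that testing the natural transformation $\mathcal{U}f$ against the labellings recovers $f$ pointwise. Once those are in place the conclusion is immediate.
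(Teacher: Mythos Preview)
Your argument is correct and is essentially the same as the paper's: both prove faithfulness by using that morphisms out of singly generated free crossed $P$-modules separate parallel arrows in $\mathcal{X}$. The paper phrases this via the abstract generator property of $\Gamma$ (citing Schubert) and argues by contrapositive, whereas you spell out the same mechanism concretely with labellings; your version is in fact the more transparent of the two.
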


\begin{proof}
We can generalise Schubert's \cite{Schubert} definition of a generator set
for the category with pushouts and equalisers to $\mathcal{X}.$ We get a
morphism $h\in \mathcal{X}\left( \left( C(\overline{x}),\partial _{x}\right)
,\left( A,\alpha \right) \right) $ such that $f_{1}h\neq f_{2}h$ for any
couple $\left( \left( A,\alpha \right) ,\left( B,\beta \right) \right) $ of
objects from $\mathcal{X}$, for any two distinct morphisms $f_{1},f_{2}\in
\mathcal{X}\left( \left( A,\alpha \right) ,\left( B,\beta \right) \right) ,$
and for an object $(C(\overline{x}),\partial _{x})$ of the category
determinated by all finitely generated free $P$-modules. Then $\mathcal{U}%
f_{1}\left( h\right) \neq \mathcal{U}f_{2}\left( h\right) $. Thus $\mathcal{U%
}$ is faithful.
\end{proof}

\begin{proposition}
$\mathcal{U}$ is an exact functor.
\end{proposition}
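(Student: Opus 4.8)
The plan is to show that $\mathcal{U}$ preserves finite limits and regular epimorphisms, since in this setting an exact functor between exact categories is precisely a functor preserving finite limits and coequalisers of kernel pairs (equivalently, regular epis). Because $\mathcal{U}$ is defined pointwise by hom-sets out of the generators $\left(C(X),\partial_X\right)$ in $\Gamma$, and limits in the functor category $\left(\Gamma^{op},\textsf{Set}\right)$ are computed pointwise, preservation of limits reduces to the elementary fact that each representable-type functor $\mathcal{X}\left(\left(C(X),\partial_X\right),-\right)$ preserves limits. So the first step is: note that for a finite limit cone in $\mathcal{X}$ (which exists, since $\mathcal{X}$ has pullbacks and equalisers hence all finite limits), applying $\mathcal{U}$ and evaluating at any $C(X)$ gives the hom-set functor applied to that cone, which is a limit cone in $\textsf{Set}$ by the universal property; since limits in $\left(\Gamma^{op},\textsf{Set}\right)$ are pointwise, $\mathcal{U}$ of the cone is a limit cone. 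In particular $\mathcal{U}$ preserves kernel pairs, pullbacks, equalisers and the terminal object.

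The second and harder step is to show $\mathcal{U}$ preserves regular epimorphisms, i.e. if $p:\left(A,\alpha\right)\to\left(B,\beta\right)$ is a coequaliser of its kernel pair in $\mathcal{X}$ (concretely, by Proposition \ref{ug4} and the effectivity result, a surjection $A\twoheadrightarrow B$ with $B=A/N$), then $\mathcal{U}p$ is a regular epi in $\left(\Gamma^{op},\textsf{Set}\right)$, equivalently a pointwise surjection. So I must show that for every finitely generated free crossed $P$-module $\left(C(X),\partial_X\right)$, the map
\[
\mathcal{X}\left(\left(C(X),\partial_X\right),\left(A,\alpha\right)\right)\longrightarrow \mathcal{X}\left(\left(C(X),\partial_X\right),\left(B,\beta\right)\right)
\]
is onto. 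Using the labelling machinery, it suffices to treat singly generated free crossed $P$-modules $C(\overline{x})$ first: a morphism $C(\overline{x})\to\left(B,\beta\right)$ is just an element $b\in\beta^{-1}(x)=\beta^{-1}(\alpha a)$ for some preimage $a$, and since $p$ is surjective we can lift $b$ to some $a\in A$ with $p(a)=b$ and $\alpha(a)=x$; the labelling $<a>:C(\overline{x})\to\left(A,\alpha\right)$ then satisfies $p\circ<a>\;=\;<b>$. For a general finitely generated $C(X)$ with basis $X=\{x_1,\dots,x_n\}$, a morphism to $\left(B,\beta\right)$ is determined by its values on the generators subject to compatibility with $\partial_X$, i.e. by an $n$-tuple; one lifts each generator's image through the surjection $A\twoheadrightarrow B$ with matching $\alpha$-value (using that $\beta$ factors through $\alpha$ via $p$), and the freeness of $C(X)$ produces the required lifted morphism, so the hom-map is surjective.

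The main obstacle I anticipate is the general finitely generated case: one needs the lifts of the generator images to be chosen so that the universal property of the free crossed $P$-module genuinely applies — that is, the data $\nu':X\to A$ must satisfy $\alpha\nu'=\omega_X$ so that it extends to a morphism of crossed $P$-modules. This is fine generator-by-generator because $p$ is a surjection of groups and $\beta p=\alpha$, so any $b_i\in\beta^{-1}(\omega_X(x_i))$ has a preimage $a_i$ which automatically lies in $\alpha^{-1}(\omega_X(x_i))$; hence no Peiffer-type obstruction arises and freeness does the rest. Having established that $\mathcal{U}$ preserves finite limits (Step 1) and regular epis (Step 2), and recalling that $\mathcal{U}$ is full and faithful (preceding propositions), we conclude that $\mathcal{U}$ is a full exact embedding, completing the proof of Theorem \ref{ug1}.
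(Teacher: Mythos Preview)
Your proof is correct and in fact cleaner than the paper's. The overall strategy is the same --- show that $\mathcal{U}$ preserves finite limits and regular epimorphisms --- but the execution differs in both halves.

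For finite limits, the paper verifies preservation of binary products and of equalisers by hand, unwinding the definitions at each $C(X)$. You instead invoke the standard fact that each hom-functor $\mathcal{X}((C(X),\partial_X),-)$ preserves all limits, together with pointwise computation of limits in $(\Gamma^{op},\mathsf{Set})$; this is more economical and covers all finite limits at once.

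For regular epimorphisms, the paper takes a specific coequaliser $p$ of a parallel pair $f,g$ and argues (somewhat informally) that $\mathcal{U}p$ is again the coequaliser of $\mathcal{U}f$ and $\mathcal{U}g$, by constructing the comparison map into an arbitrary target $Y$. Your argument instead shows that $\mathcal{U}p$ is a pointwise surjection --- hence a regular epimorphism in the presheaf topos --- by observing that regular epis in $\mathcal{X}$ are surjections and then lifting the images of generators through $p$ using the universal property of the free crossed $P$-module $C(X)$. What your route makes explicit, and the paper's hands-on argument leaves implicit, is that the key property at work is the regular-projectivity of the objects of $\Gamma$: finitely generated free crossed $P$-modules are projective with respect to surjections, which is exactly why hom out of them carries regular epis to surjections.
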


\begin{proof}
By Barr \cite{Barr}, a functor is exact if it preserves all finite
limits and regular epimorphisms. First we will show that
$\mathcal{U}$ preserves regular epimorphisms. Let $p:\left( A,\alpha
\right) \rightarrow \left( \overline{A},\overline{\alpha }\right) $
be the coequaliser of the morphisms $f$, $g:\left( C,\partial
\right) \rightarrow \left( A,\alpha \right) $ of crossed $P$-modules
in $\mathcal{X}$\ constructed in proposition \ref{ug4}. Apply the
functor $\mathcal{U}$ on the coequaliser sequence
\begin{equation*}
\xymatrix { C \ar@<-0.75ex>[r]_-g \ar@<0.75ex>[r]^-f \ar[dr]_\partial  &
A \ar[d]^\alpha \ar[r]^p & \overline{A} \ar[dl]^{\overline{\alpha }} &
\\ \ &  P & & }
\end{equation*}%
we will get the following sequence
\begin{equation*}
\xymatrix{ \mathcal{U}C \ar@<-0.75ex>[r]_{\mathcal{U}g}
\ar@<0.75ex>[r]^{\mathcal{U}f} & \mathcal{U}A \ar[r]^{\mathcal{U}p} &
\mathcal{U}\overline{A} & \\ }
\end{equation*}%
with $\mathcal{U}p\circ \mathcal{U}f=\mathcal{U}p\circ \mathcal{U}g.$ We
will prove that the above diagram is a coequaliser diagram for each $%
(C(X),\partial _{X}).$ Note that for any $h:(C(X),\partial _{X})\rightarrow
\left( C,\partial \right) $ we can say that $Imh$ is the $%
\left( X,w_{X}\right) $- indexed family of elements of $C,$ i.e.
we can write $h(X)=\left\{ c_{x}:h(x)=c_{x}\text{ for }x\in X\right\} .$
Thus, if $c_{x}\in Imh \subseteq C$ such that $%
f\left( c_{x}\right) =a_{x}$ and $g\left( c_{x}\right) =a_{x}^{\prime }$ in $%
A,$ then $Ia_{x}=Ia_{x}^{\prime }$ in $\overline{A}=A/I,$ where $I$ is the
ideal generated by all elements $f\left( c_{x}\right) g\left( c_{x}\right)
^{-1}$ for all $c_{x}\in Imh.$ Now suppose there is a morphism $%
p^{\prime }:\mathcal{X}((C(X),\partial _{X}),\left( A,\alpha \right)
)\rightarrow Y$ such that $p^{\prime }\circ \ \mathcal{U}f=p^{\prime
}\circ \
\mathcal{U}g,$ so we define a morphism $p^{\prime \prime }:\mathcal{X}%
((C(X),\partial _{X}),\left( \overline{A},\overline{\alpha }\right)
)\rightarrow Y$ by $p^{\prime \prime }\left( Ia_{x}\right) =p^{\prime
}\left( a_{x}\right).$ Clearly $p^{\prime \prime }\circ \ \mathcal{U}%
p=p^{\prime }.$ Therefore $\mathcal{U}p$ is the coequaliser of $\mathcal{U}f$
and $\mathcal{U}g$. Hence $\mathcal{U}$ preserves coequalisers.

We will show that $\mathcal{U}$ preserves all finite limits. It is enough to
prove it preserves every finite product and equalisers in $\mathcal{X}.$ Let
$\left( A,\alpha \right) ,\left( B,\beta \right) $ be two crossed $P$%
-modules in $\mathcal{X}$. Then
\begin{equation*}
\begin{array}{rcl}
\mathcal{U(}A\times _{P}B\mathcal{)}\left( C(X)\right) & = & \mathcal{X}%
\left( (C(X),\partial _{X}),\left( A\times _{P}B,\tau \right) \right) \\
& \cong & \mathcal{X}\left( (C(X),\partial _{X}),\left( A,\alpha \right)
\right) \times \mathcal{X}\left( (C(X),\partial _{X}),\left( B,\beta \right)
\right) \\
& \cong & \mathcal{U}\left( A\right) (C(X),\partial _{X})\times \mathcal{U}%
\left( B\right) (C(X),\partial _{X})%
\end{array}%
\end{equation*}%
where the product $(A\times _{P}B,\tau)$ is the pullback over the terminal object $(P,id_P)$.

Thus by induction, $\mathcal{U}$ will preserve finite products.
Finally we will prove that $\mathcal{U}$ preserves equalisers. Let $%
u:E\rightarrow A$ be the equaliser of $f,g:A\rightarrow B$ as proposition \ref{ug3}. Apply $%
\mathcal{U}$ on the sequence
\begin{equation*}
\xymatrix { E \ar[r]^u \ar[dr]_\varepsilon &  A \ar[d]^\alpha
\ar@<-0.75ex>[r]_g \ar@<0.75ex>[r]^f &  B \ar[dl]^\beta & \\ \ &  P & &
& }
\end{equation*}%
we will get \
$$
\mathcal{U}E\overset{\mathcal{U}u}{\mathcal{\longrightarrow }}%
\ \mathcal{UA}\underset{\mathcal{U}g}{\overset{\mathcal{U}f}{\rightrightarrows
}}\mathcal{U}B,
$$
and the
sequence
\begin{equation*}
\mathcal{X(}(C(X),\partial _{X}),\left( E,\varepsilon \right) \mathcal{%
)\rightarrow X(}(C(X),\partial _{X}),\left( A,\alpha \right) \mathcal{%
)\rightrightarrows X(}(C(X),\partial _{X}),\left( B,\beta \right))
\end{equation*}%
for each $(C(X),\partial _{X})\in \Gamma.$ Also
\[
\mathcal{U}f\circ \mathcal{U}u=\mathcal{U}g\circ \mathcal{U}u.
\]

Any $h:(C(X),\partial _{X})\rightarrow \left( E,\varepsilon \right) $ will
pick elements of $E$, say $e_{x},$ with the property $e_{x}=h(x)$ and $%
f\left( e_{x}\right) =g\left( e_{x}\right) .$

Now suppose there is a morphism $u^{\prime }:Y\rightarrow \mathcal{X}%
((C(X),\partial _{X}),\left( A,\alpha \right) )$ such that $\mathcal{U}%
f\circ u^{\prime }=\mathcal{U}g\circ u^{\prime }.$ Thus for any $y\in
Y,u^{\prime }\left( y\right) :(C(X),\partial _{X})\rightarrow \left(
A,\alpha \right) $ will pick $a_{x}=u^{\prime }\left( y\right) \left(
x\right) ,$ for $x\in X,$ and $f\left( a_{x}\right) =g\left( a_{x}\right) .$
So $a_{x}=e_{x}$ for some $e_{x}\in E.$ Define $u^{\prime \prime
}:Y\rightarrow ((C(X),\partial _{X}),\left( E,\varepsilon \right) )$ by $%
u^{\prime \prime }\left( y\right) =h\left( x\right) =e_{x}$ which satisfies
the equality above.

Clearly $\mathcal{U}u\circ u^{\prime \prime }=u^{\prime }.$ Thus $\mathcal{U}%
u$ is the equaliser of $\mathcal{U}f$ and $\mathcal{U}g.$ Hence $\mathcal{U}$
preserves equalisers. Therefore $\mathcal{U}$ is an exact functor.
\end{proof}

Thus, the proof of the Embedding Theorem is completed.

\newpage

\noindent U. Ege Arslan and G. Onarl\i \newline Department of
Mathematics and Computer Sciences \newline Eski\c{s}ehir Osmangazi
University \newline 26480 Eski\c{s}ehir/Turkey\newline e-mails:
\{uege, gonarli\} @ogu.edu.tr

\end{document}